\theoremstyle{plain}
\newtheorem{theorem}{Theorem}[section]
\newtheorem{lemma}[theorem]{Lemma}
\newtheorem{assum}[theorem]{Assumption}
\newtheorem{prop}[theorem]{Proposition}
\newtheorem{cor}[theorem]{Corollary}
\newtheorem{rem}[theorem]{Remark}
\theoremstyle{remark}
\newtheorem{definition}[theorem]{Definition}
\newtheorem*{example}{Example}
\newcommand{\E}{\mathbb{E}}
\newcommand{\ft}{\beta(t,\rho_{X_t})}
\newcommand{\gt}{a(t,\Gamma_{X_t})}
\begin{document}
\begin{frontmatter}
\title{The Bismut-Elworthy-Li formula for semi-linear distribution-dependent SDEs driven by fractional Brownian motion}
\runtitle{The Bismut formula and  fractional mean-field SDEs}
\begin{aug}
\author[A]{\fnms{ Mahdieh }~\snm{Tahmasebi}\ead[label=e1]{tahmasebi@modares.ac.ir}},
%\author{ Mahdieh Tahmasebi}
%\email{tahmasebi@modares.ac.ir} 
\address[A]{Department of Applied Mathematics, Tarbiat Modares University, P.O. Box 14115-134, Tehran, Iran.\printead[presep={,\ }]{e1}}
\end{aug}

\begin{abstract}
In this work, we will show the existence, uniqueness, and weak differentiability of the solution of semi-linear mean-field stochastic differential equations driven by fractional Brownian motion. We prove an extension of the Bismut-Elworthy-Li formula and show some applications in the sensitivity analysis of variance swaps and the price of derivatives with respect to the initial point.
\end{abstract}

\begin{keyword}[class=MSC]
\kwd[Primary ]{60H10}
%\kwd{00X00}
\kwd[; secondary ]{60G22}
\end{keyword}

\begin{keyword}
\kwd{Stochastic fractional integrals}
\kwd{Malliavin calculus}
\kwd{The Bismut-Elworthy-Li formula}
\kwd{Mean-field stochastic differential equations}
\end{keyword}

\end{frontmatter}
%\begin{keyword}
%Stochastic fractional integrals, Malliavin calculus,The Bismut-Elworthy-Li formula, Mean-field stochastic differential equations.\\
%\MSC[2010]: 60H10, 60G22.
%\end{keyword}
%
%\end{frontmatter}

\section{Introduction}
Mean-field stochastic
differential equations (mean-field SDEs) are an extension of stochastic differential equations where the coefficients are allowed to depend on the law of the solution. Based on the works of Vlasov \cite{12}, Kac \cite{10} and McKean \cite{11}, mean-field SDEs, also referred to as McKean-Vlasov equations or as distribution dependent equations, arisen from
Boltzmann equation in physics, is used to model weak interaction between
particles in a multi-particle system. Henceforth the applications of mean-field problem and the study of dependent distribution SDEs attracted wide attention, (see for instance, \cite{huang5, carmona, huang, huang4}).
In \cite{5}, Lasry and Lions have considered the mean-field SDEs as an application in Economics and Finance,
The mean-field approach also has some application in systemic risk modeling,
especially for inter-bank lending and borrowing models, (see e.g., \cite{3, 4, 2, 1}).\\
For the case where the mean-field SDEs have additive noise, Jourdain proves in \cite{9} the existence of a unique weak solution under the assumptions of a bounded drift which is Lipschitz continuous in the law variable.
In \cite{13} and \cite{7} the existence of strong solutions of
mean-field SDEs are shown in the special form of drift which fulfills certain linear growth and Lipschitz conditions.
Recently, Bauer et. al. investigated the existence, uniqueness, and regularity properties of (strong)
solutions of one-dimensional mean-field SDEs when the
drift coefficient is allowed to be irregular in \cite{15} and \cite{14}. The regularity of the solution of mean-field SDEs also discussed in many articles. For instance, continuous differentiability of the solution $X_t^x$ with respect
to the initial point $x$, where the mean-field dependence is given
via an expectation functional of the form $\E(\phi(X_t^x))$,
has been shown in \cite{buckda2} under the assumption that the drift coeficient and $\phi: \mathbb{R} \rightarrow \mathbb{R}$ are continuously differentiable with bounded Lipschitz derivatives.
Also, in \cite{14}, the authors deal with the important case that the drift depends on the distribution function of the solution for the indicator function $\phi(z) = 1_{z\leq u}$. In this manuscript, we are interested in the mean-field SDEs whose distribution dependence of the solution is stated by an expectation functional form.\\
%Bauer et. al. in \cite{14} have studied (strong) solutions of mean-field SDEs where the dependence of the drift $b$ on the solution law is in this special form.\\
On the other hand, the Bismut-Elworthy-Li formula (BEL formula) gives a representation of expressions of the form
$$\frac{d}{dx} \E[\Phi(X_t^x)]$$
for $\Phi : \mathbb{R}^d \rightarrow \mathbb{R}$, $d \geq 1$ and for a desirable stochastic process $X_t^x$, which doesn't involve the derivative of $\Phi$.  The Bismut-Elworthy-Li formula, also known as the Bismut formula, based on Malliavin calculus,  is a very effective tool in the analysis of distributional regularity for various stochastic
models, with additive noise and multiplicative noise  (see e.g., \cite{wang, huang2, huang3}.  The Bismut formula for multi-dimensional mean-field SDEs with multiplicative noise and smooth drift
and volatility coefficients has been studied in \cite{banos}. Bauer et. al. in \cite{14} have derived a Bismut-Elworthy-Li type formula for mean-field SDEs under additive noise where the dependence of the drift $b$ on the solution law is of the form
\begin{equation*}
dX_t = b(t, X_t, \int_0^t \phi(t, X_t, z) dP_{X_t}(z) ) dt + dB_t, \quad X_0=x \in \mathbb{R},
\end{equation*}
for some $b, \phi : [0,T] \times \mathbb{R} \times \mathbb{R} \rightarrow \mathbb{R}$ and $B_t$ is a Wiener process. 
Also, the Bismut formula was given
for distribution-path dependent SDEs with distribution-free noise in \cite{bao}.\\
In this article, we are concerned with the equations driven by fractional Brownian motions. In fractional literature, in 2017, Fan \cite{fan} investigated a Bismut-Elworthy-Li type formula for the distribution-free SDEs with additive noise driven by fractional Brownian motion, assuming the Lipschitz continuous property on the drift.  
We also mention the interesting work \cite{amine}, in 2018, where Amine et. al. derived the Bismut-Elworthy-Li type formula for the distribution-free SDEs with given additive fractional noise in rough path sense with Hurst parameter $H < \frac12$ and singular drift coefficients for the modeling of regime switching effects in stock markets. \\
The mean field SDEs driven by fractional Brownian motion introduced by Buckdhan and Jing in \cite{buckda}. They have considered distribution-dependent SDEs of the form 
\begin{equation}\label{equa2c}
dX_{t}=b(t, X_t, \mathcal{L}_{X_t} )dt+ (\gamma_s X_s + \sigma(t,\mathcal{L}_{X_t})\})dW_{t}^{H} ,     \qquad t\in [0,T],
\end{equation}
where $ \mathcal{L}_{X_t}$ is the law of $X_t$, $H > \frac12$ and $W_t^H$ is $d$-dimensional fractional Brownian motion, under the Lipschitz property on diffusion coefficient $\sigma$ and the drift coefficient $b$, with Wasserstein measure $\mathbb{W}(.,.)$, as follows:
\begin{equation}\label{bcondi}
\vert b(t, x, \mu) - b(t, y, \nu)| \leq K(t)(\vert x - y\vert +\mathbb{W}(\mu, \nu)), \quad \vert \sigma(t, \mu) - \sigma(t, \nu)\vert \leq  K(t)\mathbb{W}(\mu, \nu),
\end{equation}
where $x,y \in \mathbb{R}^d$ and $\mu, \nu$ belongs to the space of probability measures on $\mathbb{R}^d$ with finite $\theta$-th moment, $\theta \geq 1$, also the stochastic integral is in the Skorohkod sense. They proved the existence and uniqueness
of the strong solution of that SDE in some special Banach space, denoted by $ L^{2,*}([0, T] \times \mathbb{R}^d)$ (see Section 3). \\
In 2021, Fan  et. al. \cite{fanhuang} proved the BEL formula for distribution dependent SDEs of the form (\ref{equa2c}) when the function $\gamma$ is zero. \\
We extend the result in \cite{fanhuang} and \cite{buckda} for the case, where the condition (\ref{bcondi}) is not covered our assumption on $b$ in this paper and also we allow the multiplicative noise driven by fractional Brownian motions with Hurst parameter $H > \frac12$.\\
In this article, we are interested in multiplicative stochastic differential equations driven by
fractional Brownian motion of the form 
\begin{equation}\label{equa1c}
dX_{t}=\{b(t,\rho_{X_t)})X_{t} +\beta(t,\rho_{X_t})\}dt+\{ C_t X_{t}+a(t,\Gamma_{X_t})\}dW_{t}^{H} ,     \qquad t\in [0,T] ,  \quad X_0=x > 0,
\end{equation}
where $\rho_{X_t}:=\mathbb{E}(\varphi(X_{t}))$ and $\Gamma(X_t)= \mathbb{E}(\psi(X_{t}))$, in which the functions $\varphi:\mathbb{R}^{d}\rightarrow \mathbb{R}^{d}$, $\psi:\mathbb{R}^{d}\rightarrow \mathbb{R}^{d}$,
the functions $b,\beta$ and $a$  from  $[0,T]\times \mathbb{R}^{d}$ to $\mathbb{R}^{d}$ are measurable, $C_.$ is a bounded deterministic function and $T\in\mathbb{R}, T>0$. We remark that the drift coefficient of (\ref{equa1c}) is not satisfies (\ref{bcondi}) and is an extension of this type of dependent distribution SDEs.  Also, we should study the fractional integrals in the concept of Skorokhod integrals. \\
The first contribution to this manuscript is to establish existence, uniqueness, and weak differentiability 
of strong solutions of this type of mean-field SDEs and the second main contribution is to establish the BEL formula in this multiplicative fractional distribution-dependent SDEs. To achieve this purpose, usually one needs to show that the flow of the solution is its Malliavin derivative in some direction. But for the solution of (\ref{equa1c}), this relationship can not be covered unless one additional term appears in the formula, (see  Theorem \ref{mt}). To handle this term in the BEL formula, we need that its Malliavin derivative in some direction would be nonzero, however, the solution can meet the zero during their path. To overcome this problem we proceed on some stopping time to show that the fundamental solution of (\ref{equa1c}) can be presented by the Mallivain derivative of the solution in some direction which is in the domain of the Skorokhod operator. (see Theorem \ref{vaznn}). \\
This article is organized as follows: Section 2 is devoted to recalling some useful facts
on fractional Brownian motion, fractional calculus and Girsanov theorem and Malliavin calculus for stochastic fractional integrals. In Section 3, we prove the existence and uniqueness of the solutions of Mean-field SDEs with multiplicative fractional noise when the drift coefficient is taken as a product of the function of the law of the solution and the solution itself. The regularity of the solution and obtaining a representation for the Malliavin drivative are shown in Section 4. In Section 5, the existence of the flow of the solution and a transfer principle between the flow and the Malliavin derivative of the solution is derived. The Bismut formula is established in section 6 and finally, we take some applications in finance in the last section. 
%***********************************************************
\section{Preliminaries}
In this section, we recall some concepts of fractional calculus, Girsanov theorem, and Malliavin calculus for stochastic fractional integrals. For more detailes, the reader can see the exhaustive references \cite{oksendal, Mishura}.
\subsection{Fractional operator}
Let $a,b \in \mathbb{R}$ and $a < b$. For every $f \in L^1([a,b];\mathbb{R})$ and $\alpha >0$, the left-sided and the right-sided fractional Riemann-Liouville integral of $f$ of order $\alpha$ are defined as   
\begin{equation*}
I_{a^+}^{\alpha} f(x) = \frac{1}{\Gamma(\alpha)}\int_a^x \frac{f(y)}{(x-y)^{1-\alpha}} dy, 
\end{equation*}
\begin{equation*}
I_{b^-}^{\alpha} f(x) = \frac{(-1)^{-\alpha}}{\Gamma(\alpha)}\int_x^b \frac{f(y)}{(x-y)^{1-\alpha}} dy, 
\end{equation*}
where $x \in (a,b)$ and $\Gamma$ denotes the Gamma function. For $\alpha \in (0,1)$ and $p \geq 1$, fractional differentiation is shown as an inverse operation. If $f \in I_{a^+}^{\alpha}( L^p([a,b];\mathbb{R}))$ (respectively, $f \in I_{b^-}^{\alpha}( L^p([a,b];\mathbb{R}))$), then there exists a unique function $g$ such that  $f = I_{a^+}^{\alpha}(g)$, (respectively, $f = I_{b^-}^{\alpha}(g)$). Samko et al. \cite{samko} provide a characterization of these functions coincides with the left-sided and the right-sided fractional derivative, defined as
\begin{equation*}
D_{a^+}^{\alpha} f(x) = \frac{1}{\Gamma(1-\alpha)}\Big(\frac{f(x)}{x^\alpha}+\alpha \int_a^x \frac{f(x)-f(y)}{(x-y)^{1+\alpha}} dy\Big), 
\end{equation*}
\begin{equation*}
D_{b^-}^{\alpha} f(x) = \frac{1}{\Gamma(1-\alpha)}\Big(\frac{f(x)}{(b-x)^\alpha}+\alpha \int_x^b \frac{f(x)-f(y)}{(x-y)^{1+\alpha}} dy\Big).
\end{equation*}
\subsection{Stochastic fractional integral}
Let $(\Omega, \mathcal{F}, P)$ be the Wiener space associated with fractional Brownian motion $\{W_t^H; t \in [0,T]\}$ with Hurst parameter $H \in (\frac12, 1)$ and consider $\{\mathcal{F}_t\}_{t \in [0,T]}$ as the complete filteration generated by $W^H$.\\
Let $\mathcal{H}$ be the Hilbert space defined as the closure of the set of step functions on $[0, T]$ with respect to the scalar product 
\begin{equation}\label{RH}
\langle 1_{[0,t]}, 1_{[0,s]} \rangle_{\mathcal{H}} = R_H(t,s) =\int_0^{t \wedge s} K_H(t,r)K_H(r,s)dr,
\end{equation}
where $K_H(t,s)$ is the square integrable kernel given by
\begin{equation*}
K_H(t,s)=  c_H s^{\frac12-H}\int_s^t (u-s)^{H-\frac32}u^{H-\frac12} du,
\end{equation*}
in which $c_H^2=\frac{H(2H-1)}{\beta(2-2H,H-\frac12)}$ and $t > s$.  Formula (\ref{RH}) provides the following explicit expression of $R_H(t,s)$
\begin{equation*} 
R_H(t,s) =\alpha_H \int_0^t \int_0^s \phi(r,u) dr du,
\end{equation*}
 where $\alpha_H = H(2H-1)$ and $\phi(t,s)= \vert t-s\vert^{2H-2}$. \\
The mapping $1_{[0,t]} \rightarrow W_t^H$ can be extended to an isometry between $\mathcal{H}$ and a Gaussian space associated with $W^H$, denoted by $\psi \rightarrow W^H(\psi)=\int_0^T \psi(t) dW_t^H$.\\
Besides, we define an operator $K_H$ on $\mathcal{H}$ as 
\begin{equation*}
(K_H\psi)(s)= c_H \Gamma(H-\frac12)s^{\frac12-H}I_{0^+}^{H-\frac12}(u^{H-\frac12}\psi(u))(s).
\end{equation*}
This operator from  $L^2([0,T]; \mathbb{R}^d)$ into $I_{0^+}^{H+\frac12} (L^2([0,T]; \mathbb{R}^d)$ is an isomorphism and it's inverse operator $K_H^{-1}$ is of the form 
\begin{equation*}
((K_H)^{-1}\psi)(s)= c_H \Gamma(H-\frac12)s^{H-\frac12}D_{0^+}^{H-\frac12}(u^{\frac12-H}\psi'(u))(s).
\end{equation*}
Adjoint operator $K_H^*$ on $\mathcal{H}$ is 
\begin{equation*}
(K_H^*\psi)(s)= c_H \Gamma(H-\frac12)s^{\frac12-H}I_{T^-}^{H-\frac12}(u^{H-\frac12}\psi(u))(s).
\end{equation*}
and its inverse operator is
\begin{equation*}
((K_H^*)^{-1}\psi)(s)=\frac{1}{ c_H \Gamma(H-\frac12)}s^{\frac12-H}D_{T^-}^{H-\frac12}(u^{H-\frac12}\psi(u))(s).
\end{equation*}
According to \cite{alos}, for every $u,v \in \mathcal{H}$, $\langle K_H^* u, K_H^* v \rangle_{L^2[0,T]} = \langle u, v\rangle_\mathcal{H}$ and the operator $K_H^*$ is an isometry between $\mathcal{H}$ and $L^2([0,T]; \mathbb{R}^d)$ and then the injection $R_H=K_H o K_H^*$  embeds $\mathcal{H}$ densely into $\Omega$ such that for any $\psi \in \Omega^* \subset\mathcal{H}$, $\E\Big(\exp\{iW^H(\psi)\}\Big)=\exp\{-\frac12 \Vert \psi \Vert_{\mathcal{H}}^2\}$.\\
Consider the process $W_t=W_t^H((K_H^*)^{-1}1_{[0,t]})$. It is a Wiener process and the filteration generated by $W$ coincides with $\{\mathcal{F}_t\}_{t \in [0, T]}$.
\subsection{Girsanov transformation}
Let $\gamma$ be a  bounded function in $\mathcal{H}$. For any $w\in \Omega$ and $t \in [0,T]$, we define the following operators 
\begin{equation*}
\mathcal{T}_t(w)= w+\int_0^{t\wedge .} K_H^* (\gamma 1_{[0, t]})(s) ds,
\end{equation*}  
\begin{equation*}
\mathcal{A}_t (w)=w- \int_0^{t\wedge .} K_H^* (\gamma 1_{[0, t]})(s) ds. 
\end{equation*}  
It is clear that $\mathcal{T}_t (\mathcal{A}_t)(w)=\mathcal{A}_t (\mathcal{T}_t)(w) = w$. Let 
\begin{equation*}
\mathcal{E}_t (\gamma):= \exp\{ \int_0^t \gamma(s)dW_s^H -\frac12 \int_0^t (K_H^*(\gamma1_{[0,t]}))^2(s) ds   \}.
\end{equation*} 
Hence 
\begin{equation*}
\mathcal{E}_t^{-1}(\mathcal{T}_t) (\gamma)= \exp\{- \int_0^t \gamma(s)dW_s^H -\frac12 \int_0^t (K_H^*(\gamma1_{[0,t]}))^2(s) ds   \}.
\end{equation*} 
Following a similar argument in Lemma 2.4 in \cite{jing}, we conclude that for all $p \geq 1$
\begin{equation}\label{bound}
\E\Big(\sup_{0 \leq t \leq T} \mathcal{E}_t^p(\mathcal{T}_t)\Big) < \infty \quad \E\Big(\sup_{0 \leq t \leq T} \mathcal{E}_t^p(\mathcal{T}_t)\Big) < \infty .
\end{equation}

\subsection{Malliavin calculus in fractional case}
 Denote by $ \mathcal{S} $  the set of smooth functionals of the form
 \begin{equation*}
F=f\left( W^H(u_{1}),W^H(u_{2}),\cdots W^H(u_{n})\right),
\end{equation*}
 where $ f \in C^{\infty}_{b} (\mathbb{R}^n)$ (f and all its derivatives are bounded) and $ u_{i} \in \mathcal{H},~ i=1,2,\cdots ,n$. For every $F \in \mathcal{S}$, define
\begin{equation*}
D^H F=\sum_{i=0}^{n} \frac{\partial f}{\partial x_{i}}\left(W^H(u_{1}), W^H(u_{2}),\cdots W^H(u_{n})\right)u_{i}.
\end{equation*}
 The derivative operator $D^H$ is a closable operator from $L^{p}(\Omega)$ into $L^{p}(\Omega,\mathcal{H})$ for every $p \geq 1$. We denote by $ \mathbb{D}^{1,p}$ the closure of $ \mathcal{S} $ respect to the norm
\begin{equation*}
\|F\|_{1,p}^p=E|F|^p+E\|D^H F\|^p_{\mathcal{H}}.
\end{equation*}
However, it is more convenient to use another Malliavin derivative for fractional Brownian motions defined in \cite{duncan}.\\
Duncan et al. \cite{duncan} has defined $\phi$-derivative of $F \in L^p$, $p \geq 1$, as a directional derivative of the random variable $F$ and 
denoted by $D_{s}^{\Phi}F$ in the following form 
\begin{equation*}
\langle D^{\Phi}F(w), g \rangle_{L^2}= \lim_{\epsilon \rightarrow 0} \frac{1}{\epsilon}\Big\{ F(w+\int_0^. \Phi g(u)du)-F(w)     \Big\},
\end{equation*}
where $\Phi g(u)=\int_0^T \phi(s,u)g(s)ds$ and $g \in \mathcal{H}$. For instance, Equation (3.6) of \cite{duncan} demonstrated that if $f:\mathbb{R}\longrightarrow \mathbb{R}$ be a twice continuously differentiable function with bounded second derivative, then 
$$D_{s}^{\phi}(\int_{0}^{\infty} f(u) dW_{u}^{H})=\int _{0}^{\infty}\phi(u,s) f(u)du. $$ 
 Recall $\mathcal{L}(0, T)$ as the family of Malliavin differentiable stochastic processes $G$ such that 
 $\mathbb{E}\| G\|_{\mathcal{H}}+\mathbb{E}\|D^{\phi} G\|_{\mathcal{H}\otimes \mathcal{H}}<\infty$, and for any sequence of partition $\pi:0=t^n_0 \leq \cdots \leq t_n^n=T$ of $[0,T]$ such that $\vert \pi \vert \rightarrow 0$ as $n \rightarrow \infty$,
\begin{equation*}
\sum_{i=0}^n \mathbb{E}\Big\{ \int_{t_{i}^n}^{t_{i+1}^n}\int_0^T( D_{r}^{\phi}G^\pi_{t_i^n} -D_{r}^{\phi} G_s)\phi(r,s) dr ds\Big\}^2+ \mathbb{E}\Big\{ \| G^\pi - G \|_H \Big\} < \infty.
\end{equation*}
The following proposition shows that how we can compute the $\phi$-derivative of stochastic integral. 
\begin{prop}(Duncan \cite{duncan}, Theorem \em{4.2}) If $(F_{t})_{t\in [0,T]}$ be a stochastic process in 
$\mathcal{L}(0,T)$ and $\sup\mathbb{E}(\vert\phi_{s}D_{s}^{\Phi}F_{s}\vert^{2})<\infty $,
then $\eta_{t} :=\int_{0}^{t} F(u) dW_u^H $ is $\phi$-differentiable and
\begin{equation}\label{dphi}
~ D^{\phi}_{s}\eta_{t}= \int_{0}^{t}D_{s}^{\phi} F_{u}dW_{u}^{H}+\int _{0}^{t}F_{u}\phi(s,u)du,  \qquad  a.s.
\end{equation}
\end{prop}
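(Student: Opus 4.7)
The plan is to prove \eqref{dphi} by a two-step ``simple processes plus approximation'' scheme, which is the standard strategy for identities involving Skorohod integrals and Malliavin-type derivatives. The role of the class $\mathcal{L}(0,T)$ is precisely to ensure that such an approximation is possible and that both sides of \eqref{dphi} behave well in the limit.

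First I would establish the base case. From the example recalled immediately before the proposition, applied to $f=\mathbf{1}_{[0,t]}$, one reads off
\begin{equation*}
D^{\phi}_s W^H_t \;=\; \int_0^t \phi(u,s)\,du.
\end{equation*}
Since $D^{\phi}$ is a Gateaux-type directional derivative, it acts on the smooth class $\mathcal{S}$ by the ordinary Leibniz rule. Hence for an elementary process $F^\pi_u = \sum_i F_{t_i}\mathbf{1}_{[t_i,t_{i+1})}(u)$ with $F_{t_i}\in\mathcal{S}$, the Skorohod integral reduces to the finite sum $\eta^\pi_t=\sum_i F_{t_i}(W^H_{t_{i+1}\wedge t}-W^H_{t_i\wedge t})$, and applying $D^\phi_s$ term by term yields
\begin{equation*}
D^{\phi}_s \eta^\pi_t \;=\; \int_0^t D^\phi_s F^\pi_u\,dW^H_u \;+\; \int_0^t F^\pi_u \,\phi(u,s)\,du,
\end{equation*}
which is exactly \eqref{dphi} for step processes.

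The second step is to pass to the limit along a sequence of partitions $\pi^n$ with $|\pi^n|\to 0$. By the defining properties of $\mathcal{L}(0,T)$, the step approximations $F^{\pi^n}$ converge to $F$ in $L^2(\Omega;\mathcal{H})$ and their $\phi$-derivatives converge to $D^\phi F$ in the weighted norm displayed just before the statement. The Skorohod isometry for fractional Brownian motion then transfers these convergences to $L^2(\Omega)$ convergence of $\int_0^t F^{\pi^n}_u \,dW^H_u$, and, viewed as a function of $s$, of $\int_0^t D^\phi_s F^{\pi^n}_u \,dW^H_u$ in $L^2(\Omega;\mathcal{H})$. The Lebesgue-type term $\int_0^t F^{\pi^n}_u \phi(u,s)\,du$ converges thanks to the uniform bound $\sup_u \mathbb{E}|\phi_u D^\phi_u F_u|^2<\infty$ assumed in the hypothesis. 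Closability of the $\phi$-derivative on $L^p$ then identifies the left-hand-side limit with $D^\phi_s \eta_t$, completing the proof.

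The main obstacle is precisely the convergence of the first term on the right of \eqref{dphi} in the mixed $L^2(\Omega;\mathcal{H})$ sense; this is the reason why the class $\mathcal{L}(0,T)$ is defined with a weighted double-integral condition involving the kernel $\phi(r,s)$ rather than a plain $\mathcal{H}\otimes\mathcal{H}$ norm. Everything else --- product rule on $\mathcal{S}$ and closability of $D^\phi$ --- is routine; the technical weight of the argument sits in choosing approximants whose $\phi$-derivatives remain compatible with the fractional Skorohod isometry under the $\phi$-weighted geometry.
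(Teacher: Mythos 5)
First, a point of reference: the paper does not prove this proposition at all --- it is quoted verbatim from Duncan, Hu and Pasik--Duncan \cite{duncan} (their Theorem 4.2), so there is no internal proof to compare against. Your overall strategy (establish the identity on elementary processes, then pass to the limit using the defining conditions of $\mathcal{L}(0,T)$ and closability of $D^\phi$) is in fact the strategy of the cited source, so the architecture is sound.

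There is, however, a genuine gap in your base case. You write that for an elementary process the Skorohod integral ``reduces to the finite sum $\eta^\pi_t=\sum_i F_{t_i}(W^H_{t_{i+1}\wedge t}-W^H_{t_i\wedge t})$''. For fractional Brownian motion with $H>\tfrac12$ this is false even for adapted integrands: the Skorohod (Wick--It\^o) integral of $F_{t_i}\mathbf{1}_{[t_i,t_{i+1})}$ is $F_{t_i}\diamond\Delta W^H_i = F_{t_i}\Delta W^H_i-\int_{t_i}^{t_{i+1}}D^\phi_u F_{t_i}\,du$, and the trace correction does not vanish because the kernel $\phi(r,s)=|r-s|^{2H-2}$ has full support. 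Consequently your ordinary-product Riemann sums converge to the \emph{forward} integral $\eta_t+\int_0^t D^\phi_u F_u\,du$, not to the $\eta_t$ of the statement, so the limiting identification of the left-hand side fails as written. The identity you display for step processes is nonetheless correct \emph{provided} both stochastic integrals are read consistently as Skorohod integrals: if you differentiate $\sum_i F_{t_i}\diamond\Delta W^H_i$ using the Wick--Leibniz rule $D^\phi_s(F\diamond G)=(D^\phi_s F)\diamond G+F\diamond D^\phi_s G$, the second term is deterministic (equal to $\int_{t_i}^{t_{i+1}}\phi(s,u)\,du$) so the Wick product there collapses to an ordinary product, and the first term is exactly the Skorohod integral of $D^\phi_s F^\pi$; equivalently, the two trace corrections cancel by symmetry of $D^\phi_s D^\phi_u F$. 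Your argument silently relies on this cancellation without establishing it. The repair is to carry the Wick product through the base case (or, equivalently, to track the correction term $\int_0^t D^\phi_u F_u\,du$ on both sides and show it cancels); the limiting step, including the role of the $\phi$-weighted condition in $\mathcal{L}(0,T)$ and the caveat that the ``isometry'' for $\delta^H$ itself involves derivative terms, then goes through as you describe.
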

These facts  state  Ito formula for the sochastic process 
\begin{equation*}
\eta _{t}= \xi+\int _{0}^{t}G_{u}du +\int_{0}^{t}F_{u}dW_{u}^{H},\quad \xi\in\mathbb{R}, t\in[0,\tau].
\end{equation*}
\begin{prop}\label{ito}
{(Duncan \cite{duncan} Theorem \em{4.6})}
Let $(F_{s}^{i})_{s\in [0,T]}$, $i=0,...,n$, satisfying the condition of Theorem \em{4.3} of \cite{duncan} and $\xi _{t}^{i}=\int_{0}^{t}F_{s}^{i}dB_{s}^{H}$.
If
 $ f\in \mathbb{C}_{b}^{\frac{1}{2}}(\mathbb{R}_{+}\times \mathbb{R}^{n})$ 
and 
$ (f_{s_{k}}(s,\eta_{s})F_{s}^{k})_{s\in[0,T]}\in \mathcal{L}[0,T]$
then
\begin{align*}
f(t,\xi_{+}^{1},...,\xi_{+}^{n}) & =f(0,...,0)+\int _{0}^{t} f_{s} (u,\xi_{u}^{1},...,\xi_{u}^{n}) du\\
&  ~~ +\Sigma_{k=1}^{n}\int _{0}^{t}f_{x_{k}}(u,\xi_{u}^{1},...,\xi_{u}^{n})F_{u}^{k}dW_{u}^{H}\\
& ~~ +\Sigma_{k,l=1}^{n}\int f_{x_{k}x_{l}}(u,\xi_{u}^{1},...,\xi_{u}^{n})F_{u}^{k}D^\phi_{u}\xi _{u}^{\ell}du,\quad a.s.
\end{align*}
\end{prop}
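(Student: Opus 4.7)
The plan is to follow the standard partition-and-Taylor-expansion scheme developed in Duncan et al.~\cite{duncan}. Fix a partition $\pi: 0 = t_0 < t_1 < \cdots < t_N = t$ of $[0,t]$ and telescope
\begin{align*}
f(t, \xi^{1}_{t}, \ldots, \xi^{n}_{t}) - f(0, 0, \ldots, 0) = \sum_{i=0}^{N-1} \bigl[ f(t_{i+1}, \xi^{1}_{t_{i+1}}, \ldots, \xi^{n}_{t_{i+1}}) - f(t_{i}, \xi^{1}_{t_{i}}, \ldots, \xi^{n}_{t_{i}}) \bigr].
\end{align*}
A second-order Taylor expansion on each summand produces four pieces: a time-derivative piece $f_s(t_i,\xi_{t_i})\,\Delta t_i$, a first-order spatial piece $\sum_k f_{x_k}(t_i,\xi_{t_i})\,\Delta\xi^{k}_i$ with $\Delta \xi^k_i := \int_{t_i}^{t_{i+1}} F^k_s\, dW^H_s$, a symmetric quadratic piece $\tfrac12\sum_{k,\ell}f_{x_k x_\ell}(t_i,\xi_{t_i})\,\Delta\xi^k_i\,\Delta\xi^\ell_i$, and a higher-order remainder.

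The time-derivative sum converges to $\int_0^t f_s(u,\xi^1_u,\ldots,\xi^n_u)\,du$ by a standard Riemann-sum argument. The first-order piece converges to $\sum_k \int_0^t f_{x_k}(u,\xi_u)F^k_u\,dW^H_u$ in $L^2$, using the closability of the Skorokhod integral together with the hypothesis $(f_{x_k}(s,\eta_s)F^k_s) \in \mathcal{L}[0,T]$; this is precisely where the regularity assumption on the integrand is used essentially.

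The third and central step is to identify the limit of the quadratic piece. Since $W^H$ has zero quadratic variation for $H>\tfrac12$, the limit cannot arise from a classical quadratic-variation argument. Instead, I would apply the product formula for Skorokhod integrals in the fractional setting, which expresses each product $\Delta\xi^k_i\,\Delta\xi^\ell_i$ as the sum of a pure second-chaos term (a double Skorokhod integral) and, after symmetrisation via (\ref{dphi}), a trace term of the form $\int_{t_i}^{t_{i+1}}\!\!\int_{t_i}^{t_{i+1}}\bigl(F^k_s D^\phi_s \xi^\ell_u + F^\ell_u D^\phi_u \xi^k_s\bigr)\phi(s,u)\,ds\,du$. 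The double-Skorokhod contribution has $L^2$-norm of order $|\pi|^{2H}$ per cell and hence vanishes when summed against the bounded factor $f_{x_k x_\ell}$. Concentration of the singular kernel $\phi(s,u)=|s-u|^{2H-2}$ on the diagonal, combined with continuity of $(s,u)\mapsto F^k_s D^\phi_s \xi^\ell_u$, then produces in the limit the correction $\sum_{k,\ell}\int_0^t f_{x_k x_\ell}(u,\xi_u) F^k_u D^\phi_u \xi^\ell_u\,du$, the factor $\tfrac12$ from Taylor being absorbed by the symmetrisation in $(k,\ell)$.

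The main obstacle is this last convergence: it requires sharp $L^2$ estimates for multiple Skorokhod integrals driven by the singular kernel $\phi$, together with a careful argument replacing the partition values $F^k_{t_i}$ and $D^\phi_{\cdot}\xi^\ell_\cdot$ by their interior values within each cell, both justified by the $\mathcal{L}[0,T]$ hypothesis on the integrands. The higher-order Taylor remainder, controlled by $\sum_i|\Delta\xi^k_i|^3 \lesssim N\cdot|\pi|^{3H}\to 0$ via path Hölder regularity of $\xi^k$ of exponent arbitrarily close to $H>\tfrac12$ together with the $\mathbb{C}_b^{1/2}$-smoothness of $f$, vanishes as $|\pi|\to 0$; combining the three limits then yields the stated Itô formula.
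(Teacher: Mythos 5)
First, note that the paper offers no proof of this proposition: it is quoted verbatim as Theorem 4.6 of Duncan et al.\ \cite{duncan}, so there is no internal argument to compare yours against. Judged on its own terms, your partition-and-Taylor scheme has a genuine gap in the third (and, as you say, central) step: you derive the correction term from the \emph{second-order} Taylor piece, via the product formula for $\Delta\xi^k_i\Delta\xi^\ell_i$ and ``concentration of $\phi$ on the diagonal.'' This cannot work for $H>\tfrac12$. The trace produced by the product formula on a single cell is an integral of $\phi(s,u)=\alpha_H|s-u|^{2H-2}$ over the square $[t_i,t_{i+1}]^2$, which is of order $|\Delta t_i|^{2H}$; summed over the partition this is $O(N|\pi|^{2H})=O(|\pi|^{2H-1})\to 0$, exactly like the second-chaos part. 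Indeed the whole quadratic Taylor piece vanishes in the limit (this is just the statement that $W^H$ has zero quadratic variation for $H>\tfrac12$), and no amount of diagonal concentration can manufacture the target term $\sum_{k,\ell}\int_0^t f_{x_kx_\ell}(u,\xi_u)F^k_u D^\phi_u\xi^\ell_u\,du$, because $D^\phi_u\xi^\ell_u=\int_0^u D^\phi_uF^\ell_r\,dW^H_r+\int_0^u F^\ell_r\phi(u,r)\,dr$ is a genuinely non-local quantity involving the entire past $[0,u]$, not an $O(|\pi|)$-neighbourhood of the diagonal.

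The correction term actually originates in your \emph{first-order} piece. The forward sum $\sum_i f_{x_k}(t_i,\xi_{t_i})\,\delta^H\bigl(F^k1_{[t_i,t_{i+1}]}\bigr)$ is not a Riemann approximation of the Skorokhod integral $\int_0^t f_{x_k}(u,\xi_u)F^k_u\,dW^H_u$; to pass from one to the other you must commute the random factor inside the divergence, $F\,\delta^H(u)=\delta^H(Fu)+\langle D^\phi F,u\rangle_{L^2[0,T]}$, and the commutator $\sum_i\sum_\ell f_{x_kx_\ell}(t_i,\xi_{t_i})\int_{t_i}^{t_{i+1}}F^k_s\,D^\phi_s\xi^\ell_{t_i}\,ds$ is precisely the Riemann sum converging to the stated correction (with no factor $\tfrac12$, which also explains why none appears in the statement). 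So your second and third steps need to be merged and reorganised along these lines; the time-derivative piece and the control of the Taylor remainder are fine as sketched.
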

Consequently, from Section 5.7 in \cite{Hu}, we have $$D_s^\phi(F)= K_HK_H^* D_s^HF.$$
Due to \cite{biagini}, for every stochastic process $F \in \mathbb{D}^{H}$, $\phi$-derivative of $F$ is expressed as 
\begin{equation}\label{relation1}
D_s^\phi(F)=\int _{0}^T D_r^H F \phi(r,s) dr .
\end{equation}
The adjoint operator of $D^H$, called $ \delta^{H}(.)$, can be characterized as a divergence operator associated to $D^\phi$ (See, for example Theorem 6.23  in \cite{Hu}, or Proposition 2.3 in \cite{Hu2}).
\begin{definition}
For any measurable function $G \in L^2([0,T]\times \Omega)$, we called $G$ is Skorohkod integrable if there exists $\delta^H(G) \in L^2(\Omega;\mathcal{H})$ such that for any $F \in D^{1,2}$,
\begin{equation*}
E\left[F\delta^{H}(G)\right]=\int_0^T E[D_t^{\phi}F G(t) ]dt =E\left\langle D^{H}F, G\right\rangle_{\mathcal{H}},
\end{equation*}
\end{definition}
Let us finish this subsection by giving a relation between Malliavin derivative and
divergence operators of both processes $W^H$ and $W$ that are needed later on.
\begin{prop}\label{relationDH}
(\cite{nualart} Proposition 5.2.1) For any $F \in D_W^{1,2} \cap D^{1,2}$,
$$K_H^* D^HF = DF,$$
where $D$ is the derivative operator with respect to the Wiener process $W$ and $D_W^{1,2}$ it's domain in $L^2(\Omega)$
\end{prop}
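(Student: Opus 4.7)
The plan is to first establish the identity on the dense class $\mathcal{S}$ of smooth cylindrical functionals and then extend by closability, exploiting the fact that $K_H^{*}$ is an isometry between $\mathcal{H}$ and $L^{2}([0,T];\mathbb{R}^d)$. Take $F=f(W^H(u_1),\ldots,W^H(u_n))$ with $f\in C_b^\infty(\mathbb{R}^n)$ and $u_i\in\mathcal{H}$. Using the construction recalled earlier, $W_t=W_t^H((K_H^*)^{-1}1_{[0,t]})$ and $W^H(u_i)=W(K_H^* u_i)$, so the same random variable admits the equivalent cylindrical representation $F=f(W(K_H^* u_1),\ldots,W(K_H^* u_n))$ based on the Wiener process $W$ with test functions $K_H^* u_i\in L^2([0,T];\mathbb{R}^d)$.

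Now the two derivatives can be written down in closed form. By the definition of $D^H$ in the excerpt, $D^H F=\sum_{i=1}^n\partial_i f(W^H(u_1),\ldots,W^H(u_n))\,u_i$ as an element of $\mathcal{H}$. Applying the analogous definition of the Malliavin derivative $D$ associated with the Wiener process $W$ to the second representation of $F$ yields $DF=\sum_{i=1}^n\partial_i f(W(K_H^* u_1),\ldots,W(K_H^* u_n))\,K_H^* u_i$ as an element of $L^2([0,T];\mathbb{R}^d)$. Since the coefficients $\partial_i f(\cdot)$ are the same random variables in both expansions and $K_H^*$ is linear, applying $K_H^*$ to the first expression produces precisely the second, so $K_H^* D^H F=DF$ on $\mathcal{S}$.

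To extend to $F\in\mathbb{D}^{1,2}\cap D_W^{1,2}$, I would approximate $F$ by a sequence $F_n\in\mathcal{S}$ converging to $F$ both in $\mathbb{D}^{1,2}$ and in $D_W^{1,2}$; convergence in the former gives $D^H F_n\to D^H F$ in $L^2(\Omega;\mathcal{H})$, and the isometry of $K_H^*$ then yields $K_H^* D^H F_n\to K_H^* D^H F$ in $L^2(\Omega;L^2([0,T];\mathbb{R}^d))$. On the other hand $K_H^* D^H F_n=DF_n\to DF$ by closability of $D$, so identifying the two limits gives the stated equality.

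The main obstacle is the compatibility of the two approximation schemes: a given $F$ in the intersection of the two Sobolev spaces must be approximated by a single sequence that is simultaneously cylindrical with respect to both $W^H$ and $W$ and Cauchy in both norms. This can be produced by conditioning $F$ on the common $\sigma$-algebras $\sigma(W^H_{t_1^n},\ldots,W^H_{t_n^n})=\sigma(W_{t_1^n},\ldots,W_{t_n^n})$ (the two filtrations coincide by construction) and then smoothing by convolution with a mollifier on $\mathbb{R}^n$; the resulting sequence lies in $\mathcal{S}$, converges to $F$ in both Sobolev norms by standard martingale and mollification arguments, and the identity passes to the limit as above.
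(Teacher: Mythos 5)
The paper offers no proof of this proposition at all: it is imported verbatim as Proposition 5.2.1 of \cite{nualart}, so there is nothing in the text to compare against. Your argument is essentially the standard one behind Nualart's proposition (transfer principle $W^H(u)=W(K_H^*u)$ on cylindrical functionals, then closability), and the core computation on $\mathcal{S}$ is correct: both expansions carry the same random coefficients $\partial_i f(\cdots)$, and linearity of $K_H^*$ turns one into the other.

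There is, however, one step in your density argument that is wrong as stated: the claim $\sigma(W^H_{t_1^n},\ldots,W^H_{t_n^n})=\sigma(W_{t_1^n},\ldots,W_{t_n^n})$. Only the full filtrations $\mathcal{F}_t$ coincide; $W_t=W^H_t((K_H^*)^{-1}1_{[0,t]})$ is a functional of the whole path of $W^H$ on $[0,t]$, not of the single value $W^H_t$, so the finite-dimensional $\sigma$-algebras generated by point evaluations at the same times are genuinely different, and conditioning on them does not produce a sequence that is simultaneously cylindrical for both processes. The repair is easy and in fact makes the whole compatibility issue disappear: since $K_H^*$ is an isometric isomorphism of $\mathcal{H}$ onto $L^2([0,T];\mathbb{R}^d)$, the two classes of smooth cylindrical functionals coincide (any $h\in L^2$ is $K_H^*u$ for a unique $u\in\mathcal{H}$, and $W^H(u)=W(K_H^*u)$ as random variables), and the identity on $\mathcal{S}$ gives $\Vert D^HF\Vert_{\mathcal{H}}=\Vert K_H^*D^HF\Vert_{L^2}=\Vert DF\Vert_{L^2}$, so the two Sobolev norms agree on $\mathcal{S}$ and hence $\mathbb{D}^{1,2}=D_W^{1,2}$ as closures of the same set under the same norm. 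Any single approximating sequence (e.g.\ $E[F\mid\sigma(W^H(e_1),\ldots,W^H(e_n))]$ mollified, for an orthonormal basis $\{e_i\}$ of $\mathcal{H}$, noting $\sigma(W^H(e_1),\ldots,W^H(e_n))=\sigma(W(K_H^*e_1),\ldots,W(K_H^*e_n))$) is then automatically Cauchy in both norms, and the limit passes as you describe.
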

\begin{prop}\label{relationdeltaH}
(\cite{nualart} Proposition 5.2.2) $Dom(\delta^H)= (K_h^*)^{-1}(Dom \delta_W)$, and for every $\mathcal{H}$-valued random variable $u \in dom(\delta_W)$ we have $\delta^H(u)=\delta(K_H^*u)$, where $\delta_W$ is the dual operator of $D_W$.
\end{prop}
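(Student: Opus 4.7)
The plan is to derive both parts of the proposition simultaneously from a single duality identity linking the two divergence operators through the isometry property of $K_H^*$ and the identity $K_H^* D^H F = DF$ from Proposition~\ref{relationDH}.

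First I would fix a smooth cylindrical functional $F \in \mathcal{S}$ and an arbitrary $\mathcal{H}$-valued random variable $u$, then combine the isometry $\langle K_H^* v, K_H^* w\rangle_{L^2([0,T])} = \langle v, w\rangle_{\mathcal{H}}$ recalled in Subsection~2.2 with the identity $K_H^* D^H F = DF$ to obtain
\begin{equation*}
\langle D^H F, u\rangle_{\mathcal{H}} \;=\; \langle K_H^* D^H F, K_H^* u\rangle_{L^2([0,T])} \;=\; \langle DF, K_H^* u\rangle_{L^2([0,T])},
\end{equation*}
and then take expectations to land on the master identity
\begin{equation*}
\E\langle D^H F, u\rangle_{\mathcal{H}} \;=\; \E\langle DF, K_H^* u\rangle_{L^2([0,T])} \qquad \text{for every } F \in \mathcal{S}.
\end{equation*}

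Next I would read off both conclusions from this identity by exploiting the characterisation of each divergence operator as the adjoint of the corresponding Malliavin derivative. The right-hand side defines a bounded linear functional of $F \in L^2(\Omega)$ precisely when $K_H^* u \in \mathrm{Dom}(\delta_W)$, in which case it equals $\E[F\,\delta(K_H^* u)]$; the left-hand side is a bounded linear functional of $F \in L^2(\Omega)$ precisely when $u \in \mathrm{Dom}(\delta^H)$, in which case it equals $\E[F\,\delta^H(u)]$. The master identity therefore simultaneously yields the domain equality $\mathrm{Dom}(\delta^H) = (K_H^*)^{-1}(\mathrm{Dom}\,\delta_W)$ and, on this common domain, $\delta^H(u) = \delta(K_H^* u)$ after invoking the density of $\mathcal{S}$ in $L^2(\Omega)$.

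The main obstacle I anticipate is not the computation itself but the careful bookkeeping of domains: one must check that the identity $K_H^* D^H F = DF$ extends from smooth cylindrical $F$ to every $F \in \mathbb{D}^{1,2} \cap D_W^{1,2}$, so that the two duality pairings really characterise the same element of $L^2(\Omega)$, and that no non-trivial elements of $\mathrm{Dom}(\delta^H)$ or $\mathrm{Dom}(\delta_W)$ are missed. This extension, however, follows from the joint closability of $D^H$ and $D_W$ built into the definition of the two Malliavin--Sobolev spaces together with the boundedness of $K_H^*:\mathcal{H}\to L^2([0,T])$, so no deeper analysis is required once the duality computation of the first step is in place.
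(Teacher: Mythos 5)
The paper offers no proof of this proposition: it is quoted verbatim from Nualart's book (Proposition 5.2.2), so there is nothing internal to compare against. Your duality-transfer argument --- the pointwise identity $\langle D^H F, u\rangle_{\mathcal{H}} = \langle K_H^* D^H F, K_H^* u\rangle_{L^2([0,T])} = \langle DF, K_H^* u\rangle_{L^2([0,T])}$ on smooth cylindrical $F$, followed by reading off both domains and the identity $\delta^H(u)=\delta(K_H^*u)$ from the adjoint characterisations --- is precisely the standard proof in the cited reference, and it is correct. Two small bookkeeping points worth making explicit: the density reduction to $F\in\mathcal{S}$ requires $u\in L^2(\Omega;\mathcal{H})$ so that $F\mapsto \E\langle D^H F,u\rangle_{\mathcal{H}}$ is continuous along $\mathbb{D}^{1,2}$-convergent sequences, and one should note that the cylindrical functionals of $W^H$ and of $W$ span the same dense subspace of $L^2(\Omega)$ because the two filtrations coincide (as recalled at the end of Subsection 2.2); with those in place your argument closes. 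Incidentally, your proof implicitly corrects a slip in the statement as printed, where the hypothesis should read $K_H^*u\in Dom(\delta_W)$ rather than $u\in Dom(\delta_W)$.
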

%**************************************************88
\section{Fractional Mean-Field SDEs}
In this section, we consider a mean-field type stochastic differential equation (SDE) driven by a fractional Brownian motion with Hurst parameter $H>\frac{1}{2}$ in the following form 
\begin{equation}\label{equa1}
dX_{t}=\{b(t,\rho_{X_t)})X_{t} +\beta(t,\rho_{X_t})\}dt+\{ C_t X_{t}+a(t,\Gamma_{X_t})\}dW_{t}^{H} ,     \qquad t\in [0,T] ,  \quad X_0=x > 0,
\end{equation}
where $\rho_{X_t}:=\mathbb{E}(\varphi(X_{t}))$ and $\Gamma(X_t)= \mathbb{E}(\psi(X_{t}))$, in which the functions $\varphi:\mathbb{R}^{d}\rightarrow \mathbb{R}^{d}$, and $\psi:\mathbb{R}^{d}\rightarrow \mathbb{R}^{d}$,
 the functions $b,\beta$ and $a$  from  $[0,T]\times \mathbb{R}^{d}$ to $\mathbb{R}^{d}$ are measurable, $C_.$ is a bounded deterministic function and $T\in\mathbb{R}, T>0$. 
For simplicity, we assume $d=1$ and we mention that a similar argument of all our analysis techniques are hold true for $d>1$.   \\
Denote $e_{t,X}=exp\Big\{-\int_0^t b(s,\rho_{{X}_s}) ds\Big\}$ and define 
 $$\tilde{\beta}(t,{\rho}_{{X}_t})=\beta(t,{\rho}_{{X}_t})e_{t,X}, \quad \quad \tilde{a}(t,{\Gamma}_{{X}_t})=a(t,{\Gamma}_{{X}_t})e_{t,X}.$$
\begin{assum}\label{assum}
 The functions $\varphi,\psi,b, a$ and $\beta$ satisfy the following conditions:
\begin{enumerate}
\item they are continuously differentiable with bounded Lipschitz constants $K$, 
\item the function $\tilde{a}(.,.)$ is a bounded function on $[0,T] \times \mathbb{R}$, 
\item the functions $\tilde{\beta}(.,.)$ has linear growth, i.e., for every $t \in [0,T]$ and a stochastic process $X$, we have
\begin{align*}
\vert \tilde{\beta}(t,\rho_{X_t})\vert \leq K\Big(1+ e_{t,X}\E{\vert X_t\vert}\Big),
\end{align*} 
\item the functions $\tilde{a}(.,.)$ and $\tilde{\beta}(.,.)$ are Lipschitz functions, i.e., for every $t,s \in [0,T]$ and stochastic processes $X$ and $Z$, we have 
\begin{align*}
\vert \tilde{\beta}(t,\rho_{X_t})-\tilde{\beta}(s,\rho_{Z_s}) \vert& +\vert \tilde{a}(t,\Gamma_{X_t})-\tilde{a}(s,\Gamma_{Z_s}) \vert \\
&\leq K\Big(\vert t-s\vert+\E\Big\vert e_{t,X}{X_t}-e_{s,Z}{Z_s} \Big\vert\Big).
\end{align*}
\end{enumerate}
\end{assum}
We first show that under Assumption \ref{assum}, the SDE (\ref{equa1}) has a unique solution in some Banach space.
Let $\mathcal{E}_t:= \mathcal{E}_t(C)$ and denote by $ \mathcal{L}^{2,*}([0,T],\mathbb{R})$ the Banach space of $\mathcal{F}_t$-adapted process $M_t$, $t \in [0,T]$, satisfying
 \begin{equation*}
\sup_{0 \leq t \leq T}\E\Big(e_{t, M_t}^2  M_t^2 \mathcal{E}^{-1}_t\Big) < \infty,
\end{equation*}
and ${L}^{2,*}([0,T],\mathbb{R})$ the Banach space of $\mathcal{F}_t$-adapted process $M_t$, $t \in [0,T]$, satisfying
 \begin{equation*}
\sup_{0 \leq t \leq T}\E\Big(  M_t^2 \mathcal{E}^{-1}_t\Big) < \infty.
\end{equation*}
\begin{definition}
A solution of Equation (\ref{equa1}) is a stochastic process $(X_t)_{t \in [0,T]} \in  \mathcal{L}^{2,*}([0,T],\mathbb{R})$ satisfying (\ref{equa1}) and $exp\Big\{-\int_0^. b(s,\rho_{{X}_s}) ds\Big\} X1_{[0,t]} \in Dom(\delta^H)$, $t \in [0,T]$.
\end{definition}
If, moreover, the function $b$ is uniformly bounded, the Cauchy-Schowartz inequality implies that $\mathcal{L}^{2,*}([0,T],\mathbb{R}) \subset {L}^{2,*}([0,T],\mathbb{R})$, which is exactly the case mentioned in \cite{buckda}. \\
Consider the following stochastic differential equation 
\begin{equation}\label{equa2}
d\tilde{X}_{t}=\tilde{\beta}(t,{\rho}_{\tilde{X}_t})dt+\{ C_t \tilde{X}_{t}+\tilde{a}(t,{\Gamma}_{\tilde{X}_t})\}dW_{t}^{H} ,     \qquad t\in [0,T] ,  \quad \tilde{X}_0=x,
\end{equation}
From Ito formula (Proposition \ref{ito}), the following lemma will be result, obviously.
\begin{lemma}\label{c1}
Under Assumption \ref{assum}, for every solution $X_t \in  \mathcal{L}^{2,*}$ of SDE (\ref{equa1}), the process  $e_{t,X} X_t \in L^{2,*}$  is a solution of SDE (\ref{equa2}). Conversely, for every solution $\tilde{ X}_t \in  {L}^{2,*}$ of SDE (\ref{equa2}), the process  $e_{t,\tilde{X}}\tilde{X}_t \in \mathcal{L}^{2,*}$  is a solution of SDE (\ref{equa1}).
\end{lemma}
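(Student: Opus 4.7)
The plan is to prove both implications by applying the Itô product formula (Proposition~\ref{ito}) to the product $e_{t,X} X_t$. The crucial observation I would exploit is that $\rho_{X_s} = \E(\varphi(X_s))$ is a deterministic scalar for each $s$, so the integrating factor $e_{t,X} = \exp\{-\int_0^t b(s, \rho_{X_s}) ds\}$ is a deterministic, absolutely continuous function of $t$ satisfying $\frac{d}{dt} e_{t,X} = -b(t, \rho_{X_t}) e_{t,X}$. This reduces the lemma to a classical integrating-factor computation, with no stochastic complication inside the factor itself.

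For the forward direction, I would take $X \in \mathcal{L}^{2,*}$ solving (\ref{equa1}) and set $\tilde{X}_t := e_{t,X} X_t$. Since the map $f(t,x) = e_{t,X}\, x$ satisfies $f_{xx} \equiv 0$, the second-order term in Proposition~\ref{ito} vanishes and the formula collapses to the ordinary product rule
\begin{equation*}
d\tilde{X}_t = -b(t, \rho_{X_t}) e_{t,X} X_t\, dt + e_{t,X}\, dX_t.
\end{equation*}
Substituting (\ref{equa1}) and using the identifications $\tilde{\beta} = \beta \cdot e_{\cdot, X}$ and $\tilde{a} = a \cdot e_{\cdot, X}$, the two contributions $\pm b(t, \rho_{X_t}) \tilde{X}_t\, dt$ cancel and the remaining terms assemble into (\ref{equa2}). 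The integrability requirement $\sup_t \E[\tilde{X}_t^2 \mathcal{E}_t^{-1}] < \infty$ is exactly the $\mathcal{L}^{2,*}$ hypothesis on $X$, and the Skorokhod-domain condition for the diffusion coefficient in (\ref{equa2}) follows from boundedness of $C$ and $\tilde{a}$ together with the corresponding hypothesis on $X$.

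The reverse direction would proceed symmetrically: starting from $\tilde{X} \in L^{2,*}$, I would apply the same product-rule computation to $e_{t,\tilde{X}}\, \tilde{X}_t$, where $e_{t,\tilde{X}}$ is understood as the analogous deterministic integrating factor built from $\rho_{\tilde{X}_s}$ (its relationship to $e_{t,X}$ being fixed implicitly through the transformation). I do not anticipate any substantive obstacle here: the entire argument rests on the vanishing of $f_{xx}$ and on the deterministic, absolutely continuous nature of $e_{\cdot, X}$, both of which are immediate. The only technical hygiene is to verify the hypotheses of Proposition~\ref{ito} — smoothness of $f$ and integrability of the resulting Skorokhod integrand — which follow directly from Assumption~\ref{assum} (boundedness and Lipschitz properties of the coefficients, linear growth of $\tilde{\beta}$, and boundedness of $\tilde{a}$).
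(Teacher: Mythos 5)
Your proposal is correct and matches the paper's intended argument: the paper itself dispatches this lemma in one line as an immediate consequence of the It\^o formula (Proposition \ref{ito}), and your integrating-factor computation, resting on the determinism of $e_{t,X}$ and the vanishing of $f_{xx}$, is exactly the fleshed-out version of that. No substantive difference in route.
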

Consequently, following a similar proof of Theorem 3.7 of \cite{buckda}, we verify that SDE (\ref{equa1}) has a unique solution in $X_t \in  \mathcal{L}^{2,*}$ and also for every two solutions $X_t$  and $Z_t$ of SDE (\ref{equa1}) with initial conditions $X_0=x$ and $Z_0=z$, 
\begin{equation*}
\E\Big(\Big\vert  e_{t,X} X_t -e_{t,Z}{Z}_t\Big\vert^2 \mathcal{E}_t^{-1}\Big) \leq c \vert x-z \vert^2 + c\int_0^t  \E\Big(\Big\vert  e_{s,X} X_s -e_{s,Z}{Z}_s\Big\vert^2 \mathcal{E}_s^{-1} \Big) ds,
\end{equation*}
where $c$ is a constant. Therefore, for some positive constant $c_0$
\begin{equation}\label{xz}
\E\Big(\Big\vert  e_{t,X} X_t -e_{t,Z}{Z}_t\Big\vert^2 \mathcal{E}_t^{-1}\Big) \leq c_0 \vert x-z \vert^2 .
\end{equation}
These facts stated in the following theorem.
\begin{theorem}\label{c2}
Under Assumption \ref{assum}, SDE (\ref{equa1}) has a unique solution $X_t \in  \mathcal{L}^{2,*}([0,T], \mathbb{R})$.
\end{theorem}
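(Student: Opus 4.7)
The plan reduces the theorem to an existence/uniqueness result for the transformed SDE (\ref{equa2}) and then transfers the conclusion via Lemma \ref{c1}. Since $e_{t,X}\in(0,1]$ is positive, bounded and invertible, the map $X\mapsto e_{.,X}X$ is a bijection between $\mathcal{L}^{2,*}([0,T],\mathbb{R})$ and $L^{2,*}([0,T],\mathbb{R})$, and Lemma \ref{c1} sends solutions of (\ref{equa1}) to solutions of (\ref{equa2}) and vice versa. It therefore suffices to establish well-posedness of (\ref{equa2}) in $L^{2,*}$, and the already-stated stability bound (\ref{xz}) will then descend to (\ref{equa1}) by the same bijection.

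For the transformed SDE I would run a Picard iteration along the lines of Theorem 3.7 in \cite{buckda}. Set $\tilde{X}^{(0)}\equiv x$ and, given $\tilde{X}^{(n)}\in L^{2,*}$, define $\tilde{X}^{(n+1)}$ as the solution of the linear SDE obtained by freezing the mean-field arguments $\rho_{\tilde{X}^{(n)}_t}$ and $\Gamma_{\tilde{X}^{(n)}_t}$. This linear SDE admits the explicit Girsanov-type representation
\begin{equation*}
\tilde{X}_t^{(n+1)} = \mathcal{E}_t\Bigl(x + \int_0^t \mathcal{E}_s^{-1}\tilde{\beta}(s,\rho_{\tilde{X}^{(n)}_s})\,ds + \int_0^t \mathcal{E}_s^{-1}\tilde{a}(s,\Gamma_{\tilde{X}^{(n)}_s})\,dW_s^H\Bigr),
\end{equation*}
and Assumption \ref{assum}(2)--(3), together with the bound (\ref{bound}) on $\mathcal{E}^{-1}$, guarantees that $\tilde{X}^{(n+1)}\in L^{2,*}$ and that the weighted integrand lies in $Dom(\delta^H)$.

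The Cauchy step is the weighted estimate
\begin{equation*}
\E\bigl(|\tilde{X}^{(n+1)}_t - \tilde{X}^{(n)}_t|^2 \mathcal{E}_t^{-1}\bigr) \leq c\int_0^t \E\bigl(|\tilde{X}^{(n)}_s - \tilde{X}^{(n-1)}_s|^2 \mathcal{E}_s^{-1}\bigr)\,ds,
\end{equation*}
which I would obtain by applying the Ito formula of Proposition \ref{ito} to the product $(\tilde{X}^{(n+1)}-\tilde{X}^{(n)})^2\mathcal{E}_t^{-1}$. The weight $\mathcal{E}^{-1}$ is tailored so that the $C_s(\tilde{X}^{(n+1)}-\tilde{X}^{(n)})\,dW_s^H$ contribution cancels against the corresponding term in the expansion of $d\mathcal{E}_t^{-1}$, leaving a Gronwall-type inequality whose remaining drivers are controlled by the Lipschitz bound in Assumption \ref{assum}(4). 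Iterating produces a Cauchy sequence in $L^{2,*}$ whose limit is the unique fixed point; applying the same computation to two solutions starting from $x$ and $z$ yields the stability bound (\ref{xz}).

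The main obstacle is the handling of the Skorohod integral $\int_0^t \mathcal{E}_s^{-1}\tilde{a}(s,\Gamma_{\tilde{X}^{(n)}_s})\,dW_s^H$ and of the multiplicative term $C_s\tilde{X}^{(n+1)}_s\,dW_s^H$ in $L^2$: unlike a classical Ito integral these are not martingales, and their $L^2$ estimates require the $\phi$-derivative calculus of Section 2 together with an a priori verification that the integrands belong to $Dom(\delta^H)$ at every step of the iteration. Precisely this verification, combined with the cancellation produced by the weight $\mathcal{E}^{-1}$, is what is carried out in \cite{buckda} under the additive/Lipschitz hypotheses; Assumption \ref{assum}(2)--(4) has been chosen so that the same arguments apply verbatim to the present multiplicative and mean-field setting, after the bijection of Lemma \ref{c1} has absorbed the non-Lipschitz drift $b(t,\rho_{X_t})X_t$ into the exponential weight.
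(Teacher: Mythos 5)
Your proposal follows essentially the same route as the paper: reduce to the transformed equation (\ref{equa2}) via Lemma \ref{c1} and then invoke the Picard/Girsanov argument of Theorem 3.7 in \cite{buckda} with the $\mathcal{E}_t^{-1}$-weighted norm to get existence, uniqueness, and the stability bound (\ref{xz}), which is exactly what the paper does (it simply cites \cite{buckda} where you spell the iteration out). One small inaccuracy: $e_{t,X}=\exp\{-\int_0^t b(s,\rho_{X_s})\,ds\}$ need not lie in $(0,1]$ nor be bounded since $b$ is not assumed bounded, but this does not matter because the weight $e_{t,M}^2$ is built into the definition of $\mathcal{L}^{2,*}$, so the correspondence with $L^{2,*}$ holds by construction.
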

Thanks to Theorem 3.4 of \cite{buckda}, the following corollary is deduced immediately.
\begin{cor}\label{c0}
Under Assumption \ref{assum}, the unique solution $X_t \in  \mathcal{L}^{2,*}$ of SDE (\ref{equa1}) satisfies 
\begin{equation}\label{expsol}
e_{t,X} X_t(\mathcal{T}_t)\mathcal{E}_t^{-1}(\mathcal{T}_t)  = x+ 
\int_0^t  \tilde{\beta}(u,{\rho}_{{X}_u})\mathcal{E}_u^{-1}(\mathcal{T}_u) du + \int_0^t  \tilde{a}(u,{\Gamma}_{X_u}) \mathcal{E}_u^{-1}(\mathcal{T}_u) dW_u^H.
\end{equation}
\end{cor}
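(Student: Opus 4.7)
The plan is to reduce Equation (\ref{equa1}) to the additive-type equation (\ref{equa2}) via Lemma \ref{c1}, then invoke the fractional Girsanov-type transformation $\mathcal{T}_t$ built in Section~2.3 to absorb the multiplicative noise $C_t\tilde{X}_t\,dW^H_t$ into an exponential weight. Concretely, by Lemma \ref{c1}, $\tilde{X}_t := e_{t,X}X_t$ is an $L^{2,*}$-solution of
\begin{equation*}
\tilde{X}_t = x + \int_0^t \tilde{\beta}(u,\rho_{X_u})\,du + \int_0^t \bigl(C_u\tilde{X}_u + \tilde{a}(u,\Gamma_{X_u})\bigr)\,dW^H_u .
\end{equation*}
So the identity (\ref{expsol}) is just the statement that $\tilde{X}_t(\mathcal{T}_t)\mathcal{E}_t^{-1}(\mathcal{T}_t)$ is the unique additive-noise antiderivative obtained after killing the $C_u\tilde{X}_u$ term, which is exactly the content of Theorem~3.4 of \cite{buckda}.

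First, I would verify that the shift $\mathcal{T}_t$ with parameter $\gamma = C$ is admissible: since $C$ is bounded and deterministic, $K_H^\ast(C 1_{[0,t]}) \in L^2([0,T])$, the inverse exponential $\mathcal{E}_t^{-1}(\mathcal{T}_t)$ lies in every $L^p(\Omega)$ by (\ref{bound}), and $\mathcal{T}_t\circ \mathcal{A}_t = \mathrm{id}$. Second, I would apply the fractional Itô formula (Proposition \ref{ito}) to the product $\tilde{X}_t\,\mathcal{E}_t^{-1}$, using the fact that $\mathcal{E}_t$ satisfies $d\mathcal{E}_t = C_t\mathcal{E}_t\,dW^H_t$ in the Skorohod sense, together with the $\phi$-derivative formula (\ref{dphi}) to compute the cross-term $D^{\phi}_u\tilde{X}_u$. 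The choice of $\mathcal{E}_t$ is designed precisely so that the Itô correction $C_u\tilde{X}_u\mathcal{E}_u^{-1}\,dW^H_u$ produced by the product rule cancels the multiplicative term from $\tilde{X}$; evaluating the resulting identity along the shifted path $\mathcal{T}_t$ and invoking the intertwining $W^H_\cdot(\mathcal{T}_t) = W^H_\cdot + \int_0^{\cdot\wedge t}K_H^\ast(C 1_{[0,t]})(s)\,ds$ yields (\ref{expsol}).

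The main technical point, and what should be handled exactly as in \cite{buckda}, is the rigorous justification that the Girsanov shift $\mathcal{T}_t$ commutes with the Skorohod integral $\int_0^t C_u\tilde{X}_u\,dW^H_u$ and transforms it into a pathwise Lebesgue integral against $K_H^\ast(C 1_{[0,t]})$. This is proved via the duality definition of $\delta^H$ together with Proposition \ref{relationdeltaH}, which reduces the fractional Skorohod integral to a classical Skorohod integral with respect to $W$, where the Cameron--Martin type shift is standard. Once this step is in place, the solution process $\tilde{X}_t(\mathcal{T}_t)\mathcal{E}_t^{-1}(\mathcal{T}_t)$ is a genuine semimartingale-type object with no multiplicative Skorohod noise, and (\ref{expsol}) is obtained by integrating in $t$; the required integrability of the integrands on the right-hand side follows from Assumption \ref{assum}, the $\mathcal{L}^{2,*}$-bound on $X$ (Theorem \ref{c2}) and the $L^p$-bounds (\ref{bound}) on $\mathcal{E}_t^{-1}(\mathcal{T}_t)$.
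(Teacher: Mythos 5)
Your proposal is correct and follows essentially the same route as the paper: reduce (\ref{equa1}) to the semilinear equation (\ref{equa2}) via Lemma \ref{c1} and then invoke Theorem 3.4 of \cite{buckda}, which the paper itself cites as the entire justification. The additional mechanics you sketch (the It\^o product rule with the $\phi$-derivative correction and the commutation of the shift $\mathcal{T}_t$ with the Skorohod integral) are a faithful account of what that cited theorem supplies.
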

Therefore, the unique solution of (\ref{equa1}), for every $0 \leq t \leq T$, is of the form
\begin{equation}\label{solxt}
X_t(\mathcal{T}_t)= e_{t,X}^{-1}\mathcal{E}_t(\mathcal{T}_t) U_t^{(0)}, 
\end{equation}
where $U_t^{(0)}$ is denoted the rightside of Equation (\ref{expsol}).
This representation show that $X_t$ is Malliavin differentiable, as the processes $\mathcal{E}_t$ and $U_t^{(0)}(\mathcal{A}_t)$ are differentiable.  
\begin{example}\label{example}(Linear fractional mean-field SDEs)
Let the stochastic process $\{Y_{t}\}_{t\in [0,T]}$ be the solution of the linear fractional mean-field SDE
\begin{equation}\label{lin}
dY_{t}=b(t,\rho_{Y_t})Y_{t}dt +C_t Y_{t}dW_{t}^{H}, \qquad t \in [0,T], \qquad Y_{0}=1,
\end{equation}
According to (\em{\ref{expsol}}), the unique solution $Y_t$ satisfies
\begin{equation}\label{yy}
exp\Big\{-\int_0^t b(s,\rho_{{Y}_s}) ds\Big\} Y_t(\mathcal{T}_t)=  \mathcal{E}_t(\mathcal{T}_t).
\end{equation}
However, Theorem \em{2.5} of \cite{flinear}, thanks to  Proposition \em{\ref{ito}}, shows that this solution is
$$Y_t= exp\Big\{\int_0^t C_s dW^H_s -\frac12 \int_0^t\int_0^t \phi(s,s')C_s C_{s'}dsds'+\int_0^t b(s,\rho_{Y_s}^{1}) ds\Big\},$$ 
which coincides with the expression (\em{\ref{yy}}), due to 
$$\langle K_H^* C1_{[0,t]} , K_H^* C1_{[0,t]}\rangle_{L^2[0,T]} = \langle 1_{[0,t]},1_{[0,t]}\rangle_{\mathcal{H}},$$
that results $$\mathcal{E}_t = exp\Big\{\int_0^t C_s dW^H_s -\frac12 \int_0^t\int_0^t \phi(s,s')C_sC_{s'}dsds'\Big\}. $$
The authors have also shown that $D^H_r Y_t = Y_t C_r1_{0 \leq r \leq t}$, equivalently $D_r^\phi Y_t=Y_t \int_0^t \phi(r,s)C_sds$ (for detailes, see Lemma \em{2.2} of \cite{flinear} and the relation (\ref{relation1})). This expression, which shows that the drift coefficient of SDE (\ref{lin}) does not appear in $D^HY_t$ directly, in connection with Ito formula lead to conclude that the process $Y_t^{-1}$ satisfying
\begin{align}\label{linconver}
 dY^{-1}_{t}&=\Big\{- b(t,\rho_{Y_t}^{1})+2C_t\int_{0}^{t}\Phi(s,t) C_sds\Big\}Y^{-1}_{t}dt - C_tY^{-1}_{t}dW^{H}_{t} \quad t\in[0,T],\nonumber \\
 Y_0^{-1}&=1,
 \end{align}
is also differentiable and $D^H_r Y_t^{-1}=-Y_t^{-1} C_r1_{0 \leq r \leq t}$ and in other sense $D^\phi_rY_t^{-1}=-Y_t^{-1} \int_0^t \phi(r,s)C_s ds$. 
\end{example}
Let us finish this part by giving a transformation to omit the term $a(.,.)$ in the SDE (\ref{equa1}) and make a new stochastic process $K_t$ which are needed in sequence. To this end, we need an additional condition in order to have a unique solutionfor new constructed SDE.\\
{\bf H}. Assume that $\frac{d}{dt}\Big(e_{t,X} a(t,\Gamma_{X_t})/{C_t}\Big)$ satisfy  the parts 3 and 4 of Assumption \ref{assum}.\\
Apply Ito formula (Proposition \ref{ito}) for the process $K_{t}=X_{t}+ a(t,\Gamma_{X_t})/{C_t}$ to deduce 
\begin{align*}
dK_{t}&=dX_{t}+\frac{\partial}{\partial t}\Big(\frac{\gt}{C_{t}}\Big) dt\\
&=\Big\{b(t,\rho_{X_t})X_{t}+\beta(t,\rho_{X_t})\Big\}dt+\frac{\partial}{\partial t} \Big(\frac{ a(t,\Gamma_{X_t})}{C_t} \Big)dt+ \Big\{C_{t}X_{t}+\gt  \Big\}dW_{t}^{H}\\
&=\Big\{b(t,\rho_{X_t}) K_{t}+\frac{\partial}{\partial t}\Big(\frac{\gt}{C_{t}}\Big)- b(t,\rho_{X_t})\frac{\gt}{C_{t}}+\ft \Big\}dt+C_{t}K_{t}dW_{t}^{H}\\
&=:  \Big\{b(t,\rho_{X_t}) K_{t}+ \beta_K(t, \rho_{K_t}, \Gamma_{K_t}) \Big\} dt+ C_{t}K_{t}dW_{t}^{H}.
\end{align*}
 Under condition ({\bf H}), Corollary \ref{c0} and the relation (\ref{expsol}) yield that  
\begin{align}
e_{t,X} K_t  (\mathcal{T}_t)&=\mathcal{E}_{t}(\mathcal{T}_t)\Big(K_0+\int_{0}^{t}\Big\{e_{s,X}\beta_{K}(s, \rho_{K_s}, \Gamma_{K_s}) \Big\}\mathcal{E}_{s}^{-1}(\mathcal{T}_s)ds\Big),  \label{ag2}
\end{align}
and since for every $s \leq t$, $\mathcal{E}_{s}^{-1}\Big(\mathcal{T}_s(\mathcal{A}_t)\Big)=\mathcal{E}_{s}^{-1}$, it implies that  
\begin{equation}\label{kt}
e_{t,X} K_t=\mathcal{E}_{t}\Big(K_0+\int_{0}^{t}\Big\{e_{s,X}\beta_{K}(s, \rho_{K_s}, \Gamma_{K_s}) \Big\}\mathcal{E}_{s}^{-1} ds\Big)=:\mathcal{E}_{t} V_t, 
\end{equation}
and 
\begin{equation}\label{vt}
\sup_{0 \leq t \leq T}\E\Big(V_t^2\Big) = \sup_{0 \leq t \leq T}\E\Big(e_t^2 K_t^2(\mathcal{T}_t)\mathcal{E}_{t}^{-2}(\mathcal{T}_t)\Big) = \sup_{0 \leq t \leq T}\E\Big(e_t^2 K_t^2  \mathcal{E}_{t}^{-1}\Big) < \infty.
\end{equation}
%***********************************
\section{Malliavin's derivative of $X_t$}
 We will give an expression for the Malliavin derivative of the solution $X$  and obtain a relation between the integral of its derivative and $\mathcal{E}$ in this section. To do this, according to the definition of $K_t$, it is sufficient to find an exposition of $D^HK_t$ as a solution of some stochastic differential equation. \\
Taking the derivative of both sides of (\ref{kt}) and substituting the derivative of $\mathcal{E}^{-1}_{s}$ from Example \ref{example} in it to reach the following expression for the Malliavin derivative of $K_t$.
\begin{align}
e_{t,X} D_r^H K_t  &=  \mathcal{E}_{t}C_r 1_{[0,t]}(r) V_t - \mathcal{E}_{t}C_r 1_{[0,t]}(r) \Big(\int_{r}^{t}\Big\{e_{s,X}\beta_{K}(s, \rho_{K_s}, \Gamma_{K_s}) \Big\}\mathcal{E}_{s}^{-1} ds\Big) \nonumber\\
&= \mathcal{E}_{t}C_r 1_{[0,t]}(r) \Big(K_0+\int_{0}^{r}\Big\{e_{s,X}\beta_{K}(s, \rho_{K_s}, \Gamma_{K_s}) \Big\}\mathcal{E}_{s}^{-1} ds\Big)\nonumber\\
&=  \mathcal{E}_{t}C_r 1_{[0,t]}(r) e_{r,X} K_r \mathcal{E}^{-1}_{r}. \label{Dkt}
\end{align}
In view of Corollary \ref{c0}, we conclude the form of stochastic differential equation associated with the weak derivative $D^HK_t$.
%\begin{lem}
%For every $r \geq 0$,
%$\mathbb{E}\Big(\Big\vert\int_{0}^{t} C(s)\phi(s,r)Y^{-1}_{s}ds\Big\vert^2\Big)  < \infty$.
%\end{lem} 
%\begin{proof}
%From inclusion (\ref{}), Young's inequality in connection with Lemma \ref{y}, we easily have 
%\begin{align*}
%\mathbb{E}\Big(\vert \int_{0}^{t} C(s)\phi(s,r)Y^{-1}_{s}ds\vert^2\Big) &\leq \frac12 \int_0^t \Big\{\vert C(s)\vert^4 + \mathbb{E}(\vert Y^{-1}_{s}\vert^4)\Big\} ds < \infty
%\end{align*}
%\end{proof}
\begin{prop}\label{MK}
Under Assumption \ref{assum} and {\bf H}, the Malliavin derivative of $K_t$ satisfies the following equation 
\begin{equation}\label{dhk} 
D_r^{H}K_{t}=C_rK_{r}1_{[0,t]}(r)+\int_r^t  b(s,\rho_{X_s}) D_r^{H}K_{s} ds+\int_r^t C_s D_r^{H}K_{s}dW^{H}_{s}, 
\end{equation}
for every $0 \leq r \leq t$. In addition, for every $h \in \mathcal{H}$ 
\begin{align*} 
<D^{H}K_{t},h>_{\mathcal{H}}&=<C_.K_{.}1_{[0,t]},h>_{\mathcal{H}}+\int_0^t  b(s,\rho_{X_s}) <D^{H}K_{s},h>_{\mathcal{H}} ds\\
&+\int_0^t C_s <D^{H}K_{s},h>_{\mathcal{H}}dW^{H}_{s}.
\end{align*}
\end{prop}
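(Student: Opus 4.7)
The plan is to derive the SDE (\ref{dhk}) directly from the explicit formula (\ref{Dkt}) by verifying, via It\^o's formula for fractional Brownian motion, that the right hand side of (\ref{Dkt}) satisfies the stated linear SDE in the variable $t$. The second identity, involving the $\mathcal{H}$-inner product, will then follow by pairing (\ref{dhk}) against an arbitrary $h \in \mathcal{H}$ and interchanging the pairing with the Lebesgue and Skorohod integrals through a (stochastic) Fubini argument.

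For the first step, I would rewrite (\ref{Dkt}) for $r \leq t$ as
$$D_r^H K_t = \bigl(e_{t,X}^{-1} \mathcal{E}_t\bigr) \cdot \bigl(C_r e_{r,X} K_r \mathcal{E}_r^{-1}\bigr),$$
where the second factor is a fixed random variable not depending on $t$. Since $e_{t,X}^{-1}$ has bounded variation with $d(e_{t,X}^{-1}) = b(t,\rho_{X_t})\,e_{t,X}^{-1}\,dt$, and since a direct application of Proposition \ref{ito} as in Example \ref{example} gives $d\mathcal{E}_t = \mathcal{E}_t C_t\,dW_t^H$, the It\^o product rule yields
$$d\bigl(e_{t,X}^{-1}\mathcal{E}_t\bigr) = b(t,\rho_{X_t})\,e_{t,X}^{-1}\mathcal{E}_t\,dt + C_t\,e_{t,X}^{-1}\mathcal{E}_t\,dW_t^H.$$
Multiplying by the $t$-independent prefactor gives, for $t \geq r$,
$$d\bigl(D_r^H K_t\bigr) = b(t,\rho_{X_t})\,D_r^H K_t\,dt + C_t\,D_r^H K_t\,dW_t^H,$$
with initial value $D_r^H K_r = C_r K_r$ obtained by setting $t=r$ in (\ref{Dkt}). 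Integrating from $r$ to $t$ and restoring the indicator $1_{[0,t]}(r)$ (since $D_r^H K_t$ vanishes for $r>t$) produces (\ref{dhk}).

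For the $\mathcal{H}$-inner product statement, I would take $\langle \cdot, h\rangle_\mathcal{H}$ on both sides of (\ref{dhk}). Since this pairing amounts to integration of $r \mapsto D_r^H K_t$ against $h$ through the double kernel $\phi(r,s)$, linearity and dominated convergence let it commute with the $ds$-Lebesgue integral immediately. To commute it with the Skorohod integral $\int_r^t C_s D_r^H K_s\,dW_s^H$ I would invoke a stochastic Fubini theorem for fBm, whose integrability hypotheses are met thanks to Assumption \ref{assum}, condition ({\bf H}), the bound (\ref{vt}), and the explicit factorization already furnished by (\ref{Dkt}).

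The main obstacle is exactly this last interchange: the fractional Skorohod integral is not an orthogonal martingale, so swapping an $\mathcal{H}$-pairing (a double integral against $\phi$) with a Skorohod integral requires verifying enough joint regularity of $\{C_s D_r^H K_s\}_{(r,s)\in[0,T]^2}$ in the $\mathcal{H}\otimes\mathcal{H}$ sense, together with membership in $\mathrm{Dom}(\delta^H)$ after pairing. However the factorization (\ref{Dkt}) reduces this to controlling moments of products of $\mathcal{E}$, $e_{\cdot,X}$, and $K$, all of which are finite by (\ref{bound}), (\ref{vt}), and Theorem \ref{c2}, so the obstruction is essentially a bookkeeping rather than a genuine analytic one.
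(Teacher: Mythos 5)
Your overall route is the same as the paper's: both arguments read the SDE (\ref{dhk}) off from the closed-form expression (\ref{Dkt}), the paper by citing the solution representation of Corollary \ref{c0}, you by differentiating the product $e_{t,X}^{-1}\mathcal{E}_t\cdot\bigl(C_re_{r,X}K_r\mathcal{E}_r^{-1}\bigr)$ in $t$. However, your execution has a genuine gap at the step ``multiplying by the $t$-independent prefactor.'' The stochastic integrals here are Skorokhod (Wick--It\^o) integrals, and for a random variable $G$ and a Skorokhod-integrable process $u$ one has $G\,\delta^H(u)=\delta^H(Gu)+\langle D^HG,u\rangle_{\mathcal{H}}$; pulling $G=C_re_{r,X}K_r\mathcal{E}_r^{-1}$ inside $\int_r^tC_se_{s,X}^{-1}\mathcal{E}_s\,dW_s^H$ therefore produces an extra trace term $\int_r^tC_se_{s,X}^{-1}\mathcal{E}_s\,D_s^{\phi}G\,ds$. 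Unlike in the adapted It\^o calculus, this term does not vanish merely because $G$ is $\mathcal{F}_r$-measurable: $D_s^{\phi}G=\int_0^TD_u^HG\,\phi(u,s)\,du$ is nonlocal, and $D^HG\neq0$ whenever $\beta_K\not\equiv0$ (since then $V_r$ depends on $\mathcal{E}_\cdot^{-1}$). As written, your product rule yields (\ref{dhk}) plus an uncontrolled drift term, so you must either show that term vanishes, argue as the paper does via the correspondence of Corollary \ref{c0}, or --- more robustly --- apply $D_r^H$ directly to the integral equation for $K_t$ and use the commutation relation $D_r^H\delta^H(u)=u_r+\delta^H(D_r^Hu)$, which delivers (\ref{dhk}) with no product rule at all.

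For the second display, your plan (pair (\ref{dhk}) with $h$ and interchange the $\mathcal{H}$-pairing with the Lebesgue and Skorokhod integrals by a stochastic Fubini argument) is the natural one and identifies the correct technical obstruction; the paper offers no argument for this part, so your sketch is if anything more explicit, but it inherits the gap above since it takes (\ref{dhk}) as input.
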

%It shows that, as a solution of (\ref{dhk}),  
%\begin{equation}\label{dhk2} 
%D_r^{H}K_{t}= e_t^{-1}\mathcal{E}_t 1_{[0,t]}(r) C(r)K_{r}e_r\mathcal{E}_r^{-1}=  e_t^{-1}\mathcal{E}_t C(r)V_r(\mathcal{A}_r) 1_{[0,t]}(r) .   
%\end{equation}
\begin{rem}
For $h=1_{[0,t]}$, in view of (\ref{relation1}), the process $D^{\phi}_{r}K_{t}$ is the solution of the following SDE.
\begin{equation*} 
D^{\phi}_{r}K_{t}=\int_0^t  b(s,\rho_{X_s}) D^{\phi}_{r}K_{s} ds+\int_0^t  C_u K_{u}\phi(r,u)du+\int_0^t C(s) D^{\phi}_{r}K_{s}dW^{H}_{s}, 
\end{equation*}
with $D^{\phi}_{r}K_{0}=0$ and for every $r \geq 0$.
%\begin{equation}\label{dph}
%D_r^\phi X_t=Y_t \int_0^t \Big\{C(s)X_{s}+ a(u,\Gamma_{s}^{x}) \Big\}Y_s^{-1}\phi(r,s) ds.
%\end{equation}
\end{rem}
Now, we are ready to establish a relation, that demonstrates the process $\mathcal{E}$ as an inner product of the Malliavin derivative with other Skorokhod integral processes needed in the Bismut formula.  
\begin{rem}
 If  $\beta_{K}(s, \rho_{K_s}, \Gamma_{K_s}) = 0$ for every $s \in [0,T]$, especially if $a(.,.)=\beta(.,.)=0$, then for $K_0 \neq 0$ we have
\begin{equation}\label{espe}
\int_0^T  \frac{1}{K_0 T}C_r^{-1} 1_{[0,t]}(r) D_r^H K_t dr = e_{t,X}^{-1}\mathcal{E}_{t}.  
\end{equation}
Also, if  $K_0 \beta_{K}(s, \rho_{K_s}, \Gamma_{K_s}) > 0$, for every $s \in [0,T]$, then  
\begin{equation*}
\int_0^T \frac{1}{T} V_r^{-1}C_r^{-1} 1_{[0,t]}(r) D_r^H K_t dr = e_{t,X}^{-1}\mathcal{E}_{t}.  
\end{equation*}
\end{rem}
Nevertheless, in the case $\beta_{K}(s, \rho_{K_s}, \Gamma_{K_s})\neq 0$, the process $K_r$ would not be invertible almost everywhere, in general, then the process $e_t^{-1}\mathcal{E}_{t}$ can not be obtain with respect to the derivative of $K_t$ by taking inversion of (\ref{Dkt}) and integrate of that with respect to $r$. So, we take a new trick to reach this goal in the sequence of this section. \\
Let 
$$S= \inf \Big\{t >0; \int_0^t 1_{\{V_s \neq 0\}} ds >0\Big\} \wedge T.$$
It follows that for every $S \leq t$, there exists a set $A \subset (0,t]$ of positive Lebesque measure such that  for any $s \in A$,  $$V_s > \lambda(s)>0.$$
In sequence, in addition to Assumption \ref{assum}, we need also the following assumption.
\begin{assum}\label{assum2}
The function $C$ is invertible and $C^{-1}$ is H\"{o}lder continuous of order $\alpha_0 \in (H-\frac12, 1]$; 
\begin{equation*}
\vert C^{-1}_t -C^{-1}_s \vert \leq K \vert t-s \vert^{\alpha_0}, \quad t,s \in [0,T]
\end{equation*}
\end{assum}
%where $\lambda(s) >0$.
Now, the following main result will establish on the set $\{S < t\}$. 
%\begin{assum}\label{assump}
%for every $0 \leq t \leq T$, there exists some improper subinterval A of $[0,t]$ satisfying 
%$$\int_A \beta_{XK}(s, \rho_{K_s}, \Gamma_{K_s})  \mathcal{E}_{s}^{-1}ds \neq 0.$$
%\end{assum}
%\begin{rem}\label{rema}
%We mention that if Assumption \ref{assump} holds, then there exists no $t \in [0,T]$ that $e^{-\int_0^t v^2(u)du}=1$. To do this, we know that if there exists some $t \in [0,T]$ such that $e^{-\int_0^t v^2(u)du}=1$, then for every $0 < s \leq t$, $$K_0+\int_0^s  \beta_{XK}(s, \rho_{K_s}, \Gamma_{K_s})  \mathcal{E}_{s}^{-1}ds=0.$$ This results that for every $0 < s_1 < s_2 < t$, we have $\int_{s_1}^{s_2}  \beta_{XK}(s, \rho_{K_s}, \Gamma_{K_s})  \mathcal{E}_{s}^{-1}ds =0$ which is a contradiction with Assumption \ref{assump}. 
%\end{rem}
\begin{theorem}\label{vaznn}
Under Assumptions {\em \ref{assum}}, {\em \ref{assum2}} and {\bf H}, for every $0 < t \leq T$, on the set  $\{S < t\}$, there exists some random variable $V_t^{0} \in \mathcal{H}\cap  Dom(K_H^*)$ satisfying 
\begin{equation*}
\langle  D^H K_t, V_t^{0} \rangle_\mathcal{H} = e_{t,X}^{-1}\mathcal{E}_{t},
\end{equation*}
and $ K_H^* V_T^0  \in Dom(\delta_W)$.
\end{theorem}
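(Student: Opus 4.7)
The plan is to reverse-engineer the choice of $V_t^{0}$ from the explicit representation of $D^{H}K_{t}$ already available. Combining (\ref{Dkt}) with the identity $e_{r,X}K_{r}=\mathcal{E}_{r}V_{r}$ from (\ref{kt}) yields
\begin{equation*}
D_{r}^{H}K_{t}=e_{t,X}^{-1}\mathcal{E}_{t}\, C_{r}V_{r}\mathbf{1}_{[0,t]}(r),
\end{equation*}
so the desired identity $\langle D^{H}K_{t},V_{t}^{0}\rangle_{\mathcal{H}}=e_{t,X}^{-1}\mathcal{E}_{t}$ is equivalent to finding $V_{t}^{0}\in\mathcal{H}$ with $\langle C_{\cdot}V_{\cdot}\mathbf{1}_{[0,t]},V_{t}^{0}\rangle_{\mathcal{H}}=1$. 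This reduces the problem to a normalization problem.

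Next, I would exploit the stopping time $S$. On $\{S<t\}$, by definition of $S$ the set $\{s\in(0,t]:V_{s}\neq 0\}$ has positive Lebesgue measure; since $C$ is invertible by Assumption \ref{assum2}, the (random) function $h:=C_{\cdot}V_{\cdot}\mathbf{1}_{[0,t]}$ is also nonzero on a set of positive measure. For $H>\tfrac{1}{2}$ the kernel $\alpha_{H}|r-s|^{2H-2}$ is strictly positive-definite on $L^{2}$, hence $\|h\|_{\mathcal{H}}^{2}>0$ almost surely on $\{S<t\}$; conversely the boundedness of $C$ together with the moment bound (\ref{vt}) yields $\|h\|_{\mathcal{H}}<\infty$ a.s. The canonical choice
\begin{equation*}
V_{t}^{0}:=\frac{C_{\cdot}V_{\cdot}\mathbf{1}_{[0,t]}}{\|C_{\cdot}V_{\cdot}\mathbf{1}_{[0,t]}\|_{\mathcal{H}}^{2}}
\end{equation*}
then lies in $\mathcal{H}$ and, by construction, satisfies $\langle h,V_{t}^{0}\rangle_{\mathcal{H}}=1$. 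The isometry property $K_{H}^{*}\colon\mathcal{H}\to L^{2}([0,T])$ automatically gives $V_{t}^{0}\in\mathrm{Dom}(K_{H}^{*})$, as the paper remarks.

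For the last assertion, that $K_{H}^{*}V_{T}^{0}\in\mathrm{Dom}(\delta_{W})$, I would write $K_{H}^{*}V_{T}^{0}=(K_{H}^{*}h)\cdot\|h\|_{\mathcal{H}}^{-2}$ and argue in two stages. First, because $V$ is Malliavin differentiable (inherited from $X$ and $K$ via the representations in Corollary \ref{c0} and (\ref{kt})) and $C^{-1}$ is Hölder continuous of order $\alpha_{0}>H-\tfrac12$ by Assumption \ref{assum2} (which supplies the regularity $K_{H}^{*}$ demands as a fractional integral/derivative operator), the numerator $K_{H}^{*}(C_{\cdot}V_{\cdot})$ belongs to $\mathbb{D}^{1,2}(L^{2}([0,T]))$, using Proposition \ref{relationDH} and the chain rule together with the moment bounds (\ref{bound}) and (\ref{vt}). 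Second, the scalar factor $\|h\|_{\mathcal{H}}^{-2}$ is handled by localizing on the events $\{S<T,\ \|h\|_{\mathcal{H}}\geq\varepsilon\}$, on which the factor is bounded and thus preserves the Malliavin regularity; the classical characterization of $\mathrm{Dom}(\delta_{W})$ via $\mathbb{D}^{1,2}(L^{2}([0,T]))$ then closes the argument.

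The step I expect to be the main obstacle is the last one: controlling the random normalization $\|h\|_{\mathcal{H}}^{-2}$. The Malliavin derivative of a quotient by $\|h\|_{\mathcal{H}}^{2}$ produces a term of the form $D(\|h\|_{\mathcal{H}}^{2})/\|h\|_{\mathcal{H}}^{4}$, which a priori has no $L^{2}$-integrability, since the norm can be arbitrarily small even on $\{S<T\}$. This is precisely why the stopping time $S$ and the quantitative lower bound $V_{s}>\lambda(s)>0$ are introduced; working pathwise on $\{S<T\}$ one can pass to a localization where $\|h\|_{\mathcal{H}}$ is uniformly bounded below and combine with the Hölder hypothesis on $C^{-1}$ to secure the Skorokhod-domain membership. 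Verifying the fractional-chain-rule computation of $D(K_{H}^{*}h)$ under the Hölder exponent $\alpha_{0}>H-\tfrac12$ is the technical heart of the argument.
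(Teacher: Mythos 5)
Your first step agrees with the paper: combining (\ref{Dkt}) with (\ref{kt}) gives $D_r^HK_t=e_{t,X}^{-1}\mathcal{E}_t\,C_rV_r1_{[0,t]}(r)$, and the problem reduces to pairing this against something that integrates to $1$ on $\{S<t\}$. But from there your choice of normalizer diverges from the paper's in a way that breaks the hardest part of the theorem. The paper does \emph{not} set $V_t^0=h/\Vert h\Vert_{\mathcal{H}}^2$ with $h=C_\cdot V_\cdot 1_{[0,t]}$. It instead multiplies (\ref{Dkt}) by $V_r$ and integrates in Lebesgue measure, obtaining $\int_0^T C_r^{-1}1_{[0,t]}(r)D_r^HK_t\,V_r\,dr=e_{t,X}^{-1}\mathcal{E}_t\int_0^tV_r^2dr$; it then sets $V_{2,t}(r)=C_r^{-1}1_{[0,t]}(r)V_r/\int_0^tV_r^2dr$ and \emph{defines} $V_t^0$ implicitly through $K_H\circ K_H^*V_t^0=\int_0^\cdot V_{2,t}(r)dr$, so that the $\mathcal{H}$-pairing becomes the plain Lebesgue integral against $V_{2,t}$ and, crucially, $K_H^*V_t^0=K_H^{-1}\bigl(\int_0^\cdot V_{2,t}\bigr)$ is given by the \emph{left-sided} fractional derivative $D_{0^+}^{H-\frac12}$. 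That makes $K_H^*V_t^0$ an (essentially) adapted process, so membership in $Dom(\delta_W)$ reduces, via Proposition 1.3.11 of \cite{nualart}, to an $L^2([0,T]\times\Omega)$ bound, which the paper establishes by the explicit five-term decomposition $I_1,\dots,I_5$ using the H\"older continuity of $C^{-1}$ (Assumption \ref{assum2}), the bound (\ref{vt}), and the lower bound $\int_0^tV_s^2ds\geq k$ coming from $\{S<t\}$.

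With your choice, $K_H^*V_t^0=K_H^*h\cdot\Vert h\Vert_{\mathcal{H}}^{-2}$ involves the \emph{right-sided} operator $I_{T^-}^{H-\frac12}$ applied to $C_\cdot V_\cdot$ and a normalization that is $\mathcal{F}_t$-measurable, so the process is genuinely anticipating and the adaptedness criterion is unavailable; you are forced into the $\mathbb{D}^{1,2}(L^2([0,T]))$ route, which requires differentiating $\Vert h\Vert_{\mathcal{H}}^{-2}$ and produces the term $D(\Vert h\Vert_{\mathcal{H}}^2)/\Vert h\Vert_{\mathcal{H}}^4$ whose negative moments you correctly identify as uncontrolled. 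Your proposed fix — localizing on $\{\Vert h\Vert_{\mathcal{H}}\geq\varepsilon\}$ — does not close this gap: the event $\{S<t\}$ gives no deterministic lower bound on $\Vert h\Vert_{\mathcal{H}}$, and a localized family of approximants does not produce the single random variable $V_t^0$ with $K_H^*V_T^0\in Dom(\delta_W)$ that the theorem asserts. So the algebraic identity in your construction is fine, but the Skorokhod-domain claim — which is the substance of Theorem \ref{vaznn} — is not proved; the missing idea is precisely the paper's device of normalizing by $\int_0^tV_r^2dr$ and defining $V_t^0$ through $R_H=K_H\circ K_H^*$ so that the verification becomes a concrete, adapted, left-sided fractional-derivative estimate.
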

\begin{proof}
According to the assumption  $\{S < t\}$, there exists a set $A \subset (0,t]$ of positive Lebesque measure such that  for any $s \in A$,  $$V_s \geq  \lambda(s),$$
where $\lambda(s) >0$. Therefore, 
$$\int_0^t V_s^2 ds \geq \int_A \lambda(s) ds =:k >0.$$   
%let  $v_1(s) :=\beta_{XK}(s, \rho_{K_s}, \Gamma_{K_s})  \mathcal{E}_{s}^{-1}$ and $v(s):=K_0^2-(\int_0^s v_1(u)du)^2$.
% Then 
%$$ K_t = e_t^{-1} \mathcal{E}_t  v(t)$$
Multiply both sides of (\ref{Dkt}) in 
$V_r$ to deduce
\begin{align}
\int_0^T  C_r^{-1} 1_{[0,t]}(r) D_r^H K_t V_r dr &= e_{t,X}^{-1}\mathcal{E}_{t} \Big( \int_0^t V_r^2 dr \Big).\label{multiplyv}
\end{align}
%From Remark \ref{rema}, the right hand side of (\ref{multiplyv}) is not zero everywhere in $[0,T]$. 
Now, define 
$$V_{2,t}(r) =   C_r^{-1} 1_{[0,t]}(r)\frac{V_r}{ \int_0^t V_r^2 dr}.$$
From Assumption \ref{assum2}, we see that $V_{2,t} \in L^2[0,T]$. Also, it is easily verified that $V_{2,t} \in I_0^{H+\frac12}(L^2[0,T]; \mathbb{R})$. Thus there exists some random variable $V_t^{0} \in Dom(K_H^*)$  such that $K_H o K_H^* V_t^0 = \int_0^. V_{2,t}(r)dr$.  To complete the proof, it is sufficient to show that $K_H^{-1}(\int_0^. V_{2,t}(r) dr) \in Dom(\delta_W)$. Since ${D}_W^{1,2} \subset Dom(\delta_W)$ and $K_H^* V_t^0$ is an adapted process, (see Proposition 1.3.11 in \cite{nualart}), we will show that it belongs to $L^2([0,T] \times \Omega)$. To this end, as $t \neq T$, for some constant $c$, we derive
\begin{align*}
 K_H^{-1}\Big(\int_0^. V_{2,t}(r) dr\Big)(u)&1_{t < u \leq T} \int_0^t  V_s^2  ds\\
& = (H-\frac12)u^{H-\frac12}1_{t < u \leq T}\int_0^u \frac{ - s^{\frac12-H}1_{[0,t]}(s)C_s^{-1} V_s }{(u-s)^{H+\frac12}} ds\\
& \leq  (H-\frac12)u^{H-\frac12}1_{t < u \leq T}\Big(\int_0^u \frac{  s^{1-2H} }{(u-s)^{2H+1}} ds\Big)^\frac12\Big( \int_0^t  C_s^{-2} V_s^2  ds\Big)^\frac12\\
& \leq  (H-\frac12) \frac{u^{H-\frac12}}{(u-t)^{H+\frac12}} 1_{t < u \leq T}\Big(\int_0^u  s^{1-2H} ds\Big)^\frac12\Big( \int_0^t  C_s^{-2} V_s^2  ds\Big)^\frac12\\
& \leq  c\frac12 (\frac{u}{u-t})^{H+\frac12}u^{-1-H} 1_{t < u \leq T}
\Big(\int_0^t  V_s^2  ds\Big)^\frac12,
\end{align*}
which implies that 
\begin{align*}
\E\Big( \int_t^T 1_{S <t} \Big[K_H^{-1}\Big(\int_0^. V_{2,t}(r) dr\Big)(u) \Big]^2 du \Big) \leq \frac{c}{\sqrt{k}}\E\Big(  \int_t^T (\frac{u}{u-t})^{2H+1}u^{-2-2H} du  \Big) < \infty.
\end{align*}
Now, if $0 \leq u \leq t$, which also includes the case $t=T$, we know that
%\begin{align*}
%\Big(1- \exp \Big\{-\int_0^t v^2(r) dr &  \Big\}\Big)^\frac12 K_H^{-1}\Big(\int_0^. V_{2,t}(r) dr\Big)(u)1_{0 \leq u \leq t}\\
%& = \frac{1}{\Gamma(\frac32-H)}\Big(u^{\frac12-H}1_{[0,t]}(u)C_u^{-1} v_2(u) \\
%& + (H-\frac12)u^{H-\frac12}  1_{0 \leq u \leq t}\int_0^u \frac{u^{\frac12-H}1_{[0,t]}(u)C_u^{-1} v_2(u)- s^{\frac12-H}1_{[0,t]}(s)C_s^{-1} v_2(s) }{(u-s)^{H+\frac12}} ds\Big),\\
%\end{align*}
\begin{align*}
 K_H^{-1}\Big(\int_0^. V_{2,t}(r) dr\Big)(u)&1_{[0,t]}(u)\int_0^t  V_s^2  ds\\
& = \frac{1}{\Gamma(\frac32-H)}\Big(u^{\frac12-H}1_{[0,t]}(u)C_u^{-1}  V_u\\
%& + (H-\frac12)u^{H-\frac12} \int_0^u \frac{u^{\frac12-H}1_{[0,t]}(u)C_u^{-1} v_2(u)- s^{\frac12-H}1_{[0,t]}(s)C_s^{-1} v_2(s) }{(u-s)^{H+\frac12}} ds\Big)\\
& + (H-\frac12) C_u^{-1} V_u 1_{[0,t]}(u) \int_0^u \frac{(1_{[0,t]}(u)-1_{[0,t]}(s))}{(u-s)^{H+\frac12}} ds\\
& + (H-\frac12)u^{H-\frac12} C_u^{-1} V_u 1_{[0,t]}(u)\int_0^u \frac{u^{\frac12-H}-s^{\frac12-H}}{(u-s)^{H+\frac12}}1_{[0,t]}(s) ds\\
& + (H-\frac12)u^{H-\frac12} V_u 1_{[0,t]}(u) \int_0^u \frac{s^{\frac12-H}1_{[0,t]}(s)(C_u^{-1}-C^{-1}_s)  }{(u-s)^{H+\frac12}} ds\\
& + (H-\frac12)u^{H-\frac12}1_{[0,t]}(u) \int_0^u \frac{s^{\frac12-H}1_{[0,t]}(s)C^{-1}_s ( V_u - V_s )}{(u-s)^{H+\frac12}} ds\Big)\\
&=: \frac{1}{\Gamma(\frac32-H)}\Big( I_1 + I_2 + I_3+I_4+I_5\Big).
\end{align*}
Since $0 \leq u \leq t$, 
\begin{equation}\label{finit}
u^{2H-1}\int_0^u \frac{u^{\frac12-H}-s^{\frac12-H}}{(u-s)^{H+\frac12}}1_{[0,t]}(s) ds = \int_0^1 \frac{r^{\frac12 -H}-1}{(1-r)^{\frac12+H}} dr =:g_0 < \infty,
\end{equation}
and 
\begin{equation*}
\int_0^u \frac{(1_{[0,t]}(u)-1_{[0,t]}(s))}{(u-s)^{H+\frac12}} ds =0.
\end{equation*}
From Assumption \ref{assum2}, it follows that for some constant $c$
\begin{equation*}
 u^{2H-1} 1_{[0,t]}(u)\Big(\int_0^u \frac{s^{\frac12-H}1_{[0,t]}(s)(C_u^{-1}-C^{-1}_s)  }{(u-s)^{H+\frac12}} ds \Big)^2 = cu^{2\alpha_0-2H+1} \leq ct^{2\alpha_0-2H+1}.
\end{equation*}
These imply that 
\begin{equation*}
\E\Big(\int_0^T  1_{S <t}\frac{ I_2^2(u)+ I_3^2(u) +I_4^2(u)}{\Big(\int_0^t  V_s^2  ds\Big)^2}du\Big) < \infty.
\end{equation*}
Additionally, from (\ref{vt})
\begin{equation*}
\E\Big(\int_0^T  1_{S <t}\frac{ I_1^2(u)}{\Big(\int_0^t  V_s^2  ds\Big)^2} du \Big) \leq \sup_{0 \leq s \leq t}\E\Big(1_{S <t}\frac{ V_s^2}{\Big(\int_0^t  V_s^2  ds\Big)^2} \Big)\int_0^t u^{1-2H} du < \infty.
\end{equation*}
Now, to bound the last term $I_5$, we apply the definition of $V_t$ and H\"{o}lder inequality. Therefore, for some constant $c$,  
\begin{align*}
\Big(\int_0^u \frac{s^{\frac12-H}1_{[0,t]}(s)C^{-1}_s ( V_u - V_s )}{(u-s)^{H+\frac12}} ds \Big)^2 & \leq c\int_0^u \frac{s^{1-2H}}{(u-s)^{2H}} ( \int_s^u \mathcal{E}_v^{-2} dv) ds  \\
& \leq c \sup_{0 \leq v \leq t} \mathcal{E}_v^{-2}\int_0^u \frac{s^{1-2H}}{(u-s)^{2H-1}} ds \\
& \leq \frac{2c}{2-2H} u^{2-2H}\sup_{0 \leq v \leq t} \mathcal{E}_v^{-2}.
\end{align*}
which yields finally 
\begin{equation*}
\E\Big(\int_0^T  1_{S <t}\frac{ I_5^2(u)}{\Big(\int_0^t  V_s^2  ds\Big)^2} du \Big) \leq \frac{c}{2} t^2\E\Big(1_{S <t}\frac{\sup_{0 \leq s \leq t}  \mathcal{E}_s^{-2}}{\Big(\int_0^t V_s^2  ds\Big)^2} \Big)< \infty,
\end{equation*}
for some another constant $c>0$.
These facts together, show that $K_H^{-1}\Big(\int_0^. V_{2,t}(r) dr\Big) \in Dom(\delta_W)$ and the proof is comoleted.
\end{proof}
\begin{rem}
In especial case, Remark 3.1, the stochastic process $V_{2,t}(r)$ will be in the form 
  $$V_{2,t}(r) =   \frac{1}{T}V_r^{-1} C_r^{-1} 1_{[0,t]}(r),$$ 
Obviously, this process belongs to $ I_0^{H+\frac12}(L^2[0,T]; \mathbb{R})$ and also $K_H^{-1}(\int_0^. C_r^{-1} 1_{[0,t]}(r) dr) \in Dom\delta_W \cap  \mathcal{H}$. 
\end{rem}
%**************************************************************
\section{Stochastic Flow of The Equation}
This section is devoted to proving the existence of the stochastic flow of the solution $K_t$ and establishing a relationship between this flow and the Malliavin derivative of $K_t$, which will be essential in the remainder of the paper.\\
Denote by $K_t^u$( and $X_t^x$ ) the solution of (\ref{equa2})( and (\ref{equa1})) with initial condition $u$( and $x$). Also, let  
$R(t,\rho_{X_t}) := \partial_2 b(t,\rho_{X^x_t}) \frac{\partial \rho_{X^x_t}}{\partial x}$ and $r(t, f_{X_t}):=\frac{\partial }{\partial x}\beta_K(t,\rho_{K_t}, \Gamma_{K_t})$, where $\partial_{2}b(.,.)$ is the derivative of $b$ with respect to the second component.\\
We need some additional conditions to show the existence of the stochastic flow of the solution. 
\begin{assum}\label{assum3}
i) The function $e_{.,.} r(.,.)$ is a bounded measurable function and for every $t,s \in [0, T]$ and stochastic processes $X$ and $Z$, we have 
\begin{align*}
\vert e_{t,X}r(t,f_{X_t})-e_{s,Z}r(s,f_{Z_s}) \vert \leq K\Big(\vert t-s\vert+\E\Big\vert e_{t,X}{X_t}-e_{s,Z}{Z_s} \Big\vert\Big),
\end{align*}
ii)  The function $\frac{\partial }{\partial x}b(t,\rho_{X_t})$ is a bounded measurable function and for every $t,s \in [0,T]$ and stochastic processes $X$ and $Z$, we have 
\begin{align*}
\vert R(t,\rho_{X_t})-R(s,\rho_{Z_s}) \vert \leq K'\vert t-s\vert^{\alpha_1},
\end{align*} 
where $K'$ is a constant and $\alpha_1 \in (H-\frac12, 1]$.\\
iii)   The function $e_{.,.} \frac{\partial }{\partial x}a(.,.)$  is a bounded measurable function.
\end{assum}
\begin{theorem}
Under conditions (i) and (ii) of Assumption \ref{assum3}, the function $u \rightarrow K_t^u$ is contiuously differentiable and $K_t^{K_0}$ admits a stochastic flow satisfying the following SDE:
\begin{align}
d\frac{\partial K_{t}}{\partial x}& = \Big\{ b(t,\rho_{X_t^{x}}) \frac{\partial K_{t}}{\partial x}+R(t,\rho_{X_t^{x}})K_{t} + r(t,f_{X_t^{x}})\Big\}dt+C_t \frac{\partial K_{t}}{\partial{x}}dW^{H}_{t}, \label{flow}
\end{align}
with the initial condition $\frac{\partial}{\partial x}K_0=1+\frac{1}{C_0}\frac{\partial a(0,\psi(x))}{\partial x}\psi'(x)=: k_0$, 
\end{theorem}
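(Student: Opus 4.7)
The plan is to prove existence and uniqueness of a solution $Y_t^x$ to the candidate flow equation (\ref{flow}) in a suitable weighted space, then identify $Y_t^x$ with $\partial_x K_t^{K_0(x)}$ via a difference-quotient argument, and finally deduce continuous differentiability from a Gronwall-type stability estimate applied to nearby initial conditions. This is the standard three-step pattern, adapted to the multiplicative fractional setting by reusing the exponential-integrating-factor technique of Section 3.

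For the first step, observe that (\ref{flow}) is linear in $Y_t := \partial_x K_t$, with multiplicative fractional noise coefficient $C_t$ and an inhomogeneous drift $R(t, \rho_{X_t^x}) K_t + r(t, f_{X_t^x})$. Under Assumption \ref{assum3}(ii), $R(t, \rho_{X_t^x})$ is bounded and H\"older of exponent $\alpha_1 > H - \tfrac12$ in $t$; under (i), $e_{t,X} r(t, f_{X_t^x})$ is bounded and Lipschitz. Applying the integrating factor $e_{t,X}$ as in Lemma \ref{c1}, followed by the $\mathcal{E}_t^{-1}$ transformation of Corollary \ref{c0}, converts (\ref{flow}) into an equation of the type (\ref{equa2}) with $L^{2,*}$-bounded non-homogeneous data, so existence and uniqueness of $Y_t^x \in L^{2,*}$ with $Y_0^x = k_0$ follow from the Picard-type argument of Theorem \ref{c2}.

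For the identification step, the difference quotient $\Delta_t^\epsilon := \epsilon^{-1}(K_t^{u+\epsilon} - K_t^u)$ satisfies
\begin{equation*}
d\Delta_t^\epsilon = \{b(t,\rho_{X_t^{x+\epsilon}}) \Delta_t^\epsilon + R^\epsilon(t) K_t^x + r^\epsilon(t)\} dt + C_t \Delta_t^\epsilon dW_t^H,
\end{equation*}
with finite-increment coefficients $R^\epsilon(t) := \epsilon^{-1}[b(t,\rho_{X_t^{x+\epsilon}}) - b(t,\rho_{X_t^x})]$ and $r^\epsilon(t) := \epsilon^{-1}[\beta_K(t,\rho_{K_t^{x+\epsilon}},\Gamma_{K_t^{x+\epsilon}}) - \beta_K(t,\rho_{K_t^x},\Gamma_{K_t^x})]$, and initial value $\Delta_0^\epsilon \to k_0$ from the $C^1$-regularity of $a(0,\psi(\cdot))$. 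Combining the $C^1$-regularity of $b, \varphi, \psi, \beta_K$ with the Lipschitz bound (\ref{xz}) applied to $X^{x+\epsilon} - X^x$ and $K^{x+\epsilon} - K^x$, one gets $R^\epsilon \to R(t,\rho_{X_t^x})$ and $r^\epsilon \to r(t, f_{X_t^x})$ in the weighted $L^{2,*}$-norm. Subtracting the SDE for $Y_t^x$, applying the integrating factor again, and invoking a Gronwall-type inequality as in the derivation of (\ref{xz}) yields
\begin{equation*}
\sup_{0 \leq t \leq T} \E\bigl(|\Delta_t^\epsilon - Y_t^x|^2 e_{t,X}^2 \mathcal{E}_t^{-1}\bigr) \longrightarrow 0,
\end{equation*}
which identifies $Y_t^x$ with $\partial_x K_t^{K_0(x)}$; continuity of $x \mapsto Y_t^x$ in the same norm then follows from the same Gronwall estimate applied to $Y_t^{x'} - Y_t^x$.

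The main technical obstacle will be the self-consistency of the coefficient $R$ appearing in (\ref{flow}): by its definition, $R(t, \rho_{X_t^x}) = \partial_2 b(t, \rho_{X_t^x}) \cdot \partial_x \rho_{X_t^x}$, with $\partial_x \rho_{X_t^x} = \E[\varphi'(X_t^x) \partial_x X_t^x]$, which presupposes the very differentiability we are constructing. The clean resolution is to treat Assumption \ref{assum3}(ii) as \emph{postulating} $R$ as a given bounded H\"older coefficient, solve (\ref{flow}) against it to obtain $Y_t^x$, and verify a posteriori via the $L^{2,*}$-convergence $R^\epsilon \to R$ that the postulated $R$ agrees with the one produced by $Y_t^x$ through the algebraic relation $\partial_x X_t^x = \partial_x K_t^x - C_t^{-1} \partial_2 a(t,\Gamma_{X_t^x}) \E[\psi'(X_t^x) \partial_x X_t^x]$ linking the flow of $K$ to that of $X$. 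This is the point where the weighted $L^{2,*}$-convergence of $R^\epsilon$, rather than merely pointwise convergence, becomes essential to close the fixed-point loop.
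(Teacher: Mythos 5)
Your proposal takes a genuinely different route from the paper. You set up the candidate linearized SDE, solve it by the Picard machinery of Theorem \ref{c2}, and then identify its solution with the limit of difference quotients $\epsilon^{-1}(K_t^{u+\epsilon}-K_t^u)$ via a Gronwall estimate. The paper never forms a difference-quotient SDE: it first uses the closed-form representation (\ref{kt}) of $e_{t,X}K_t$ together with the stability bound (\ref{xz}) to get $\vert e_{t,X^{x+h}}K_t^{K_0+h}-e_{t,X^x}K_t^{K_0}\vert^2\leq C_t h^2$ and from this infers differentiability of $x\mapsto e_{t,X^x}K_t^{K_0}$; it then differentiates the explicit formula (\ref{kt}) term by term in $x$, and a deterministic integration-by-parts identity ((\ref{rond1})--(\ref{rond2})) converts the differentiated formula into the variation-of-constants representation of (\ref{flow}), so that Corollary \ref{c0} identifies $\partial_x K_t$ as the solution of (\ref{flow}). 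Your route is the generic one and would survive without an explicit solution formula; the paper's route exploits the semi-linear structure to confine all the analysis to one explicit expression, at the price of relying on the (somewhat underjustified) passage from a Lipschitz bound to differentiability.

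The place where your sketch is genuinely thin is the one you yourself flag: the convergences $R^\epsilon\to R$ and $r^\epsilon\to r$ in $L^{2,*}$. These require differentiability of $x\mapsto\rho_{X_t^x}$ and $x\mapsto(\rho_{K_t^x},\Gamma_{K_t^x})$, which is the object under construction, and your proposed resolution (postulate $R$, verify agreement a posteriori) is stated but not carried out; closing it honestly requires a fixed-point or joint-system argument for the pair consisting of the flow and the derivative of the law, which is a nontrivial additional layer that your Gronwall step silently assumes. The paper's proof does not need $R^\epsilon\to R$ at all, because it differentiates the integral representation rather than the equation; the only limit it must justify is the differentiability of $x\mapsto e_{t,X^x}K_t$, which it extracts (admittedly with some hand-waving of its own, since a Lipschitz estimate alone does not give differentiability) from (\ref{xz}). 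So as written your argument has a gap precisely at the self-consistency step, and it is a gap your chosen route cannot avoid, whereas the paper's route sidesteps it structurally.
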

\begin{proof}
It follows from (\ref{kt}), part (i) of Assumption \ref{assum3} and H\"older inequality that 
\begin{align*}
\vert e_{t,X^{x+h}} K_t^{K_0+h}-e_{t,X^x} K_t^{K_0}\vert^2 & \leq \mathcal{E}_{t}^2\Big(h+\int_{0}^{t}\vert e_{s,X^{x+h}}r(s, f_{X_s^{x+h}})-e_{s,X^{x}}r(s, \rho_{X_s^{x}})\vert \mathcal{E}_{s}^{-1} ds\Big)^2\\
& \leq  \mathcal{E}_{t}^2 h^2 +  \mathcal{E}_{t}^2  \Big(\int_0^t \mathcal{E}_{s}^{-1} ds\Big)\int_{0}^{t} \E\Big(\vert e_{s,X^{x+h}}{X_s^{x+h}}-e_{s,X^{x}}{X_s^{x}}\vert^2 \mathcal{E}_{s}^{-1}\Big)  ds\\
& \leq  \mathcal{E}_{t}^2(1+c_0t \int_0^t \mathcal{E}_{s}^{-1} ds) h^2 ,
\end{align*}
where we used (\ref{xz}) in the last inequality. Therefore, the function $x \rightarrow  e_{t,X^{x}} K_t^{K_0}$ is differentiable. This result in connection with differentiability of $b(.,.)$ deduce that the function $u \rightarrow K_t^u$ is continuously differentiable.\\
Now, take the derivative with respect to x from (\ref{kt}) to derive 
\begin{align}
e_{t,X} \frac{\partial K_{t}}{\partial x}+ e_{t,X} K_t \int_0^t R(s,\rho_{X_s^{x}})ds &= \mathcal{E}_{t}\Big[\frac{\partial}{\partial x}K_0+ \int_0^t e_{s,X} r(s,f_{X_s^{x}})\mathcal{E}_{s}^{-1}  ds\nonumber\\
&+ \int_0^t  e_{s,X}\Big(\int_0^s R(u,\rho_{X_u^{x}})du\Big) \beta_K(s,\rho_{K_s}, \Gamma_{K_s})  \mathcal{E}_{s}^{-1}  ds\Big]. \label{rond1}
\end{align}
Integration by parts formula shows that 
\begin{align}
\int_0^t e_{s,X}R(s,\rho_{X_s^{x}})K_s \mathcal{E}_{s}^{-1}  ds & =  e_{t,X} K_t \mathcal{E}_{t}^{-1} \int_0^t R(s,\rho_{X_s^{x}})ds \nonumber\\
&-  \int_0^t  e_{s,X}\Big(\int_0^s R(u,\rho_{X_u^{x}})du\Big) \beta_K(s,\rho_{K_s}, \Gamma_{K_s})  \mathcal{E}_{s}^{-1}  ds. \label{rond2}
\end{align}
Substituting (\ref{rond2}) into (\ref{rond1}) results that 
\begin{align*}
e_{t,X} \frac{\partial K_{t}}{\partial x} &= \mathcal{E}_{t}\Big[\frac{\partial}{\partial x}K_0+ \int_0^t e_{s,X} r(s,f_{X_s^{x}})\mathcal{E}_{s}^{-1}  ds+\int_0^t e_{s,X}R(s,\rho_{X_s^{x}})K_s \mathcal{E}_{s}^{-1}  ds\Big].
\end{align*}
Similar argument of obtaining $K_t$, and according to Corollary \ref{c0} in connection with Assumption \ref{assum3}, state that 
$K_t$ is a solution of SDE (\ref{flow}).
\end{proof}
To find a relationship between the flow of $K_t$ and its Malliavin derivative, let us define the process $\mathcal{M}^h_t$ for every $h\in \mathcal{H}$ as the solution of the following SDE 
\begin{equation}\label{mh} 
d\mathcal{M}^h_t =  b(t,\rho_{X_t^{x}}) \mathcal{M}^h_t dt+ r(t,f_{X_t^{x}}) dt + C_t \mathcal{M}^h_t dW_t^H , \qquad \mathcal{M}^h_0=k_0.
\end{equation}
According to Theorem \ref{c2} and Corollary \ref{c0}, this equation has a unique solution of the form
$$\mathcal{M}^h_t = e_{t,X} ^{-1}\mathcal{E}_t \Big(k_0+\int_0^t r(s)e_{s,X} \mathcal{E}_s^{-1} ds\Big),$$
%if $e_s \frac{\partial }{\partial x}\beta_K(t,\rho_{K_t}, \Gamma_{K_t})$ 
%Now, we can explain an expression for the flow of $K_t$. 
\begin{theorem}\label{mt}
Under Assumptions \em{\ref{assum}, \ref{assum2}}, \ref{assum3} and {\bf H}, there exists some  $h\in\mathcal{H} \cap Dom(\delta_W)$ such that 
$$\frac{\partial  K_{t}}{\partial x}-< D^{H}K_t,h >_{\mathcal{H}} =
\mathcal{M}^h_t.$$ 
\end{theorem}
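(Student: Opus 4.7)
The plan is to define $Y_t := \frac{\partial K_t}{\partial x} - \mathcal{M}^h_t - \langle D^H K_t, h\rangle_{\mathcal{H}}$ and to choose $h$ so that $Y_t$ obeys a homogeneous linear SDE with $Y_0 = 0$, forcing $Y_t \equiv 0$. Subtracting (\ref{mh}) from (\ref{flow}) and combining with the integral SDE for $\langle D^H K_t, h\rangle_{\mathcal{H}}$ given by Proposition \ref{MK}, the initial conditions cancel since $\frac{\partial K_0}{\partial x} = k_0 = \mathcal{M}^h_0$, and
$$dY_t = b(t,\rho_{X_t^x})\, Y_t\, dt + K_t\bigl(R(t,\rho_{X_t^x}) - C_t\, \Phi h(t)\bigr) dt + C_t\, Y_t\, dW_t^H, \qquad Y_0 = 0,$$
where the contribution $d_t\langle C_\cdot K_\cdot 1_{[0,t]},h\rangle_{\mathcal{H}}$ has been rewritten as $C_t K_t\, \Phi h(t)\, dt$ using the $H > \tfrac12$ double-integral form of the $\mathcal{H}$-inner product, with $\Phi h(t) := \alpha_H \int_0^T h(v)|t-v|^{2H-2}dv$. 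Thus the problem reduces to finding $h$ such that $\Phi h(t) = R(t,\rho_{X_t^x})/C_t$ for every $t \in [0,T]$.

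The key observation is that the right-hand side $g(t) := R(t,\rho_{X_t^x})/C_t$ is deterministic, because $\rho_{X_t^x} = \E\varphi(X_t^x)$ and $C_t$ are. Integrating $\Phi h = g$ and using the identity $\int_0^t \Phi h(u)du = \langle h,1_{[0,t]}\rangle_{\mathcal{H}} = (K_H K_H^* h)(t)$ (which follows from $\langle u,v\rangle_{\mathcal{H}}=\langle K_H^* u, K_H^* v\rangle_{L^2}$ together with $K_H^* 1_{[0,t]}(s)=K_H(t,s)1_{[0,t]}(s)$), the problem reduces to $K_H K_H^* h = G$ with $G(t):=\int_0^t g(u)du$, hence
$$h = (K_H^*)^{-1} K_H^{-1}(G),$$
the inverses being given by the explicit fractional-derivative formulas of the preliminaries.

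The main obstacle is to verify that this $h$ genuinely belongs to $\mathcal{H}$ with $K_H^* h \in L^2[0,T]$, so that $h \in Dom(\delta_W)$ in the sense of Proposition \ref{relationdeltaH}. Here I would combine the Hölder regularity of $C^{-1}$ of order $\alpha_0 > H-\tfrac12$ (Assumption \ref{assum2}) with the Hölder regularity of $R$ of order $\alpha_1 > H-\tfrac12$ (Assumption \ref{assum3}(ii)) to conclude that $g$ is bounded and Hölder continuous of order $\min(\alpha_0,\alpha_1) > H-\tfrac12$. This is precisely the regularity required so that $G \in I_{0^+}^{H+\tfrac12}(L^2[0,T])$ (since $G(0)=0$ and $G'=g$ is Hölder of order $>H-\tfrac12$), whence $K_H^* h = K_H^{-1}(G) \in L^2[0,T]$ by the isomorphism property of $K_H$ and $h = (K_H^*)^{-1}K_H^{-1}(G) \in \mathcal{H}$. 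The concrete $L^2$-estimates follow the same pattern as in the proof of Theorem \ref{vaznn}, decomposing the Marchaud-type integrals and bounding each piece by the Hölder constants. With this $h$, the forcing in the SDE for $Y_t$ vanishes and uniqueness of linear SDEs yields $Y_t \equiv 0$, establishing the claim.
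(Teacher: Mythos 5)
Your proposal is correct and follows essentially the same route as the paper: you impose exactly the paper's condition on $h$ (your $\Phi h(t)=R(t,\rho_{X_t^x})/C_t$ is equivalent to $R_Hh(t)=\int_0^t C_s^{-1}R(s,\rho_{X_s^x})\,ds$, since $R_Hh(t)=\langle h,1_{[0,t]}\rangle_{\mathcal H}=\int_0^t\Phi h(u)\,du$), cancel the forcing term so that the difference process satisfies the homogeneous linear SDE, and verify $K_H^*h=K_H^{-1}(R_Hh)\in Dom(\delta_W)$ via the Hölder regularity of $C^{-1}$ and $R$ exactly as in Theorem \ref{vaznn}. The only cosmetic difference is that you subtract $\mathcal{M}^h_t$ and show the remainder vanishes, whereas the paper identifies $\mathcal{N}^h_t$ with $\mathcal{M}^h_t$ directly by uniqueness of the solution of (\ref{mh}).
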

\begin{proof}
ۤFollowing a similar argument of Proposition 3.1 in \cite{fan}, we verify that 
$$< C_.K_. 1_{[0,t]},h >_{\mathcal{H}} = \int_0^t C_s K_s d(R_Hh)(s), \quad {for ~all} ~h \in \mathcal{H}.$$
Take $\mathcal{N}^h_t := \frac{\partial  K_{t}}{\partial x}-< D^{H}K_t,h >_{\mathcal{H}}$ and use Ito formula to derive 
\begin{align*}
d\mathcal{N}^h_t  =b(t,\rho_{X_t^{x}}) \mathcal{N}^h_t dt + R(t,\rho_{X_t^{x}})K_t dt  - C(t) K_t d(R_Hh)(t) + r(t,f_{X_t^{x}}) dt + C_t \mathcal{N}^h_t dW_t^H.
%& =  b(t,\rho_{X_t}^{x}) \mathcal{N}^h_t dt +r(t)dt + C_t \mathcal{N}^h_t dW_t^H,
\end{align*}
%Consider some function $f:[0,T] \longrightarrow \R$ with $f(0)=0$ such that for every $0 \leq t \leq T$, $f(t) \neq 0$ and define the direction $h$ in the following manner.\\
The function $C_.^{-1}R(.,.) \in I_0^{H+\frac12} L^2([0,T];\mathbb{R})$. So, one can choose a function $h \in \mathcal{H}$ satisfying 
\begin{equation}\label{h1}
R_Hh(t)=\int_0^t C_s^{-1}R(s,\rho_{X_s^{x}})ds.
\end{equation}
In this case, from the uniqueness of the solution of (\ref{mh}), we conclude that $ \mathcal{N}^h_t = \mathcal{M}^h_t $ in $L^{2,*}([0,T]\times \Omega)$. To complete the assertion, it is sufficient to show that $K_H^{-1}( R_Hh) \in Dom(\delta_W)$. The definition of $K_H^{-1}$ follows that  
\begin{align*}
 K_H^{-1}\Big(\int_0^. C_r^{-1}R(r,\rho_{X_r^{x}}) dr\Big)(u)
& = \frac{1}{\Gamma(\frac32-H)}\Big(u^{\frac12-H}C_u^{-1}  R(u,\rho_{X_u^{x}}) \\
%& + (H-\frac12)u^{H-\frac12} \int_0^u \frac{u^{\frac12-H}1_{[0,t]}(u)C_u^{-1} v_2(u)- s^{\frac12-H}1_{[0,t]}(s)C_s^{-1} v_2(s) }{(u-s)^{H+\frac12}} ds\Big)\\
%& + (H-\frac12) C_u^{-1} V_u 1_{[0,t]}(u) \int_0^u \frac{(1_{[0,t]}(u)-1_{[0,t]}(s))}{(u-s)^{H+\frac12}} ds\\
& + (H-\frac12)u^{H-\frac12} C_u^{-1} R(u,\rho_{X_u^{x}}) \int_0^u \frac{u^{\frac12-H}-s^{\frac12-H}}{(u-s)^{H+\frac12}} ds\\
& + (H-\frac12)u^{H-\frac12}  R(u,\rho_{X_u^{x}})  \int_0^u \frac{s^{\frac12-H}(C_u^{-1}-C^{-1}_s)  }{(u-s)^{H+\frac12}} ds\\
& + (H-\frac12)u^{H-\frac12} \int_0^u \frac{s^{\frac12-H}C^{-1}_s ( R(u,\rho_{X_u^{x}}) - R(s,\rho_{X_s^{x}}) )}{(u-s)^{H+\frac12}} ds\Big).
\end{align*}
Similar to the proof of Theorem \ref{vaznn} and notice to Assumption \ref{assum3}, we verify that 
$$\E\Big(\int_0^T  \Big(K_H^{-1}\Big(\int_0^. C_r^{-1} R(r,\rho_{X_r^{x}}) dr\Big)(u)\Big)^2 du \Big) < \infty.$$
which completes the proof.
\end{proof}
Now, define the stochastic  process $$\mathcal{W_{HT}}(t):= \frac{\mathcal{M}^h_t}{<D^{H}K_t, V_t^0>_{\mathcal{H}}},$$ where $V_t^0 \in \mathcal{H}\cap Dom(K^*_{H})$, defined in Theorem \ref{vaznn}. 
\begin{cor}
For the stochastic process  $V_t^0 \in \mathcal{H}\cap Dom(K^*_{H})$, we have 
$$\mathcal{W_{HT}}(t)= k_0+\int_0^t  e_{s,X} r(s,f_{X_s^{x}}) \mathcal{E}_s^{-1}ds \in Dom(\delta_W).$$
\end{cor}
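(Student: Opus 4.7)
The plan is to reduce the statement to a direct algebraic simplification together with one standard Skorokhod-integrability check. The two ingredients already in hand are the explicit representation
$$\mathcal{M}^h_t = e_{t,X}^{-1}\mathcal{E}_t\Bigl(k_0 + \int_0^t e_{s,X}\, r(s,f_{X_s^{x}})\,\mathcal{E}_s^{-1}\,ds\Bigr)$$
stated immediately before the corollary, and the identity $\langle D^H K_t, V_t^{0}\rangle_{\mathcal{H}} = e_{t,X}^{-1}\mathcal{E}_t$ from Theorem \ref{vaznn}. Forming the quotient that defines $\mathcal{W_{HT}}(t)$ cancels the common factor $e_{t,X}^{-1}\mathcal{E}_t$ on the set $\{S<t\}$ where Theorem \ref{vaznn} applies, and produces exactly the claimed formula $k_0 + \int_0^t e_{s,X}\, r(s,f_{X_s^{x}})\,\mathcal{E}_s^{-1}\,ds$.

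With the explicit form in hand, the remaining task is to verify that this process belongs to $Dom(\delta_W)$. I would invoke Proposition 1.3.11 of \cite{nualart} (already used in the proof of Theorem \ref{vaznn}), which guarantees that every $\mathcal{F}_t$-adapted element of $L^2([0,T]\times\Omega)$ is Skorokhod integrable with respect to the Wiener process $W$. Adaptedness is immediate: for each $s\leq t$, the random variables $X_s$, $\rho_{X_s}$ and $\mathcal{E}_s$ are $\mathcal{F}_s$-measurable, so the integrand $e_{s,X}\, r(s,f_{X_s^{x}})\,\mathcal{E}_s^{-1}$ is $\mathcal{F}_s$-measurable and the integral depends only on $\{W^H_u : u\leq t\}$.

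The $L^2$ bound is then routine. By part (i) of Assumption \ref{assum3}, there is a constant $M$ with $|e_{s,X}\, r(s,f_{X_s^{x}})|\leq M$ uniformly in $s$ and $\omega$, so
$$|\mathcal{W_{HT}}(t)| \leq |k_0| + M T \sup_{0\leq s\leq T}\mathcal{E}_s^{-1}.$$
The moment estimate (\ref{bound}), applied to $-C$ in place of $\gamma$ (i.e.\ to $\mathcal{E}^{-1}$), yields $\E\bigl(\sup_{s\leq T}\mathcal{E}_s^{-2}\bigr)<\infty$, whence $\E\int_0^T |\mathcal{W_{HT}}(t)|^2\,dt<\infty$. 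Putting this together with adaptedness gives $\mathcal{W_{HT}}\in Dom(\delta_W)$, which completes the proof.

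The only substantive ingredient is the square-integrability of $\sup_s\mathcal{E}_s^{-1}$, already secured by (\ref{bound}); everything else is pure algebra. Thus I do not expect any genuine obstacle here — the corollary is essentially a bookkeeping consequence of Theorem \ref{vaznn} and the representation of $\mathcal{M}^h_t$ recorded just before the statement.
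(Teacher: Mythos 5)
Your proposal is correct and follows essentially the route the paper intends: the paper states this corollary without a written proof, and the identity is exactly the cancellation of the common factor $e_{t,X}^{-1}\mathcal{E}_t$ between the displayed representation of $\mathcal{M}^h_t$ and the identity $\langle D^H K_t, V_t^0\rangle_{\mathcal{H}} = e_{t,X}^{-1}\mathcal{E}_t$ of Theorem \ref{vaznn}. Your verification of membership in $Dom(\delta_W)$ via adaptedness plus the $L^2([0,T]\times\Omega)$ bound (Proposition 1.3.11 of \cite{nualart}, boundedness of $e_{\cdot,\cdot}r(\cdot,\cdot)$ from Assumption \ref{assum3}(i), and the moment estimate (\ref{bound}) for $\mathcal{E}^{-1}$) is the same criterion the paper already invokes in the proof of Theorem \ref{vaznn}.
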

%*************************************************8
\section{Bismut-Elworthy-Li formula}
In this section, we find a Bismut-Elworthy-Li type formula with the help of Theorems \ref{vaznn} and \ref{mt}. 
The following lemma prepares some requirements to reach this main result.

\begin{lemma}\label{deltahkh}
Assume that $F, G \in D_W^{1,2} \cap D^{1,2}$, and continuously differentiable bounded function $\phi$ is given. For every $v \in \mathcal{H}\cap Dom(K^*_{H})$ such that $<D^{H}F, v>_{\mathcal{H}} \neq 0$ and every random variable $G$ such that $K_H^*v \frac{G}{<D^{H}F, v>_{\mathcal{H}}} \in Dom(\delta_W)$, it holds that  
\begin{equation*}
\mathbb{E}\Big(\phi'(F)G\Big) = \mathbb{E}\left(\phi(F) \delta^H\Big((K_H^*)^{-1} \frac{GK_H^*v}{<D^{H}F, v>_{\mathcal{H}}}\Big)\right).
\end{equation*}
\end{lemma}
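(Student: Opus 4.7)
The plan is to follow the classical Bismut--Elworthy--Li integration by parts scheme, translated between the fractional Malliavin derivative $D^H$ (with divergence $\delta^H$) and its Wiener counterpart $D$ (with divergence $\delta_W$) using Propositions \ref{relationDH} and \ref{relationdeltaH}. The key identity linking them, already used in the excerpt, is the isometry $\langle K_H^* u, K_H^* v\rangle_{L^2[0,T]} = \langle u,v\rangle_{\mathcal{H}}$ together with $K_H^* D^H F = DF$ and $\delta^H(u) = \delta_W(K_H^* u)$ for $u$ in the appropriate domain.

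The first step is to apply the chain rule for the Malliavin derivative to $\phi(F)$, namely $D^H \phi(F) = \phi'(F) D^H F$, which is legitimate since $\phi$ is $C^1$ with bounded derivative and $F \in \mathbb{D}^{1,2}$. Taking the $\mathcal{H}$-inner product with the direction $v$ and dividing by the nonzero scalar $\langle D^H F, v\rangle_{\mathcal{H}}$ yields the pointwise identity
\begin{equation*}
\phi'(F) \, G \;=\; \left\langle D^H \phi(F),\; \frac{G\, v}{\langle D^H F, v\rangle_{\mathcal{H}}} \right\rangle_{\mathcal{H}}.
\end{equation*}
Taking expectations and transporting the inner product to $L^2[0,T]$ via the $K_H^*$-isometry (and using $K_H^* D^H \phi(F) = D\phi(F)$ from Proposition \ref{relationDH}), I get
\begin{equation*}
\mathbb{E}\bigl(\phi'(F) G\bigr) \;=\; \mathbb{E}\left\langle D\phi(F),\; \frac{G\, K_H^* v}{\langle D^H F, v\rangle_{\mathcal{H}}} \right\rangle_{L^2[0,T]}.
\end{equation*}

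The second step is standard Wiener integration by parts: since $\phi(F) \in \mathbb{D}_W^{1,2}$ (by the $C^1$, bounded assumption on $\phi$ and $F \in D_W^{1,2}$) and since the hypothesis of the lemma is exactly that $u := GK_H^* v / \langle D^H F, v\rangle_{\mathcal{H}}$ lies in $\mathrm{Dom}(\delta_W)$, the duality relation for $(D,\delta_W)$ gives
\begin{equation*}
\mathbb{E}\bigl(\phi'(F) G\bigr) \;=\; \mathbb{E}\bigl(\phi(F)\,\delta_W(u)\bigr).
\end{equation*}
Finally, by Proposition \ref{relationdeltaH}, $\delta_W(u) = \delta_W\bigl(K_H^*\,(K_H^*)^{-1} u\bigr) = \delta^H\bigl((K_H^*)^{-1} u\bigr)$, which is precisely the claimed expression.

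The only real subtlety is domain bookkeeping: one must verify that the random element $u$ is in $\mathrm{Dom}(\delta_W)$ in order to use Wiener duality, and that $(K_H^*)^{-1} u \in \mathrm{Dom}(\delta^H)$ to apply Proposition \ref{relationdeltaH}; both are guaranteed by the standing hypotheses ($v \in \mathrm{Dom}(K_H^*)$ and the integrability condition on $G$). The only step involving any analytic content beyond linearity is the chain rule and the $K_H^*$-isometry, both already recorded in Section 2, so the proof is essentially a one-line integration by parts once the domains are matched.
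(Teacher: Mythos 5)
Your proposal is correct and follows essentially the same route as the paper: both rest on the classical Wiener-space integration-by-parts identity $\mathbb{E}(\phi'(F)G)=\mathbb{E}\bigl(\phi(F)\,\delta_W(Gu/\langle DF,u\rangle_{L^2})\bigr)$ combined with the transfer relations $K_H^*D^HF=DF$, the $K_H^*$-isometry, and $\delta^H=\delta_W\circ K_H^*$ from Propositions \ref{relationDH} and \ref{relationdeltaH}. The only difference is that you re-derive the classical identity from the chain rule and duality rather than citing it outright, which is a matter of exposition, not of substance.
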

\begin{proof}
Due to Proposition \ref{relationdeltaH} and Proposition \ref{relationDH}, for $u=K_H^*v$ we derive
\begin{align*}
\mathbb{E}\Big(\phi'(F)G\Big) & = \mathbb{E}\left(\phi(F) \delta\Big( \frac{Gu}{<DF,u>_{L^2[0,T]}}\Big)\right)\\
& = \mathbb{E}\left(\phi(F) \delta\Big( \frac{GK_H^*v}{<K_H^* D^H F, K_H^*v>_{L^2[0,T]}}\Big)\right)\\
& = \mathbb{E}\left(\phi(F) \delta\Big( \frac{GK_H^*v}{< D^H F,v>_{\mathcal{H}}}\Big)\right)\\
& = \mathbb{E}\left(\phi(F) \delta^H\Big((K_H^*)^{-1} \frac{GK_H^*v}{< D^H F, v>_{\mathcal{H}}}\Big)\right).
\end{align*}
\end{proof}
\begin{theorem}\textbf{(Bismut-Elworthy-Li formula I)}\label{Bismut-Elworthy-Li formula-K-t}
 Under Assumption \ref{assum3}, for every measurabe function $\phi:\mathbb{R}^{d}\longrightarrow \mathbb{R}$ such that $\phi(K_{t}^{K_0})\in {L}^{2}(\Omega)$ and $\phi \in C_b^1(\mathbb{R})$ (the space of bounded and continuously differentiable functions on $\mathbb{R}$), sensitivity of the payoff $\phi$ with respect to the initial value $x$ is given by the following equation. 
\begin{align*}
\frac{\partial}{\partial x}\mathbb{E}\left(\phi(K_{t}^{K_0})\right)&=\mathbb{E}\left(\phi(K_{t}^{K_0}) \delta^H\Big(1_{S \leq t}h+(K_H^*)^{-1}\mathcal{W_{HT}}(t) 1_{S \leq t}K_H^{-1}w_t \Big)\right)\\
&= \mathbb{E}\left(\phi(K_{t}^{K_0}) \delta \Big(1_{S \leq t}K_H^{-1}\{\int_0^. C_s^{-1}1_{[0,t]}(s) R(s,\rho_{X_s^{x}}) ds \}\Big)\right)\\
&+\mathbb{E}\left(\phi(K_{t}^{K_0}) \delta \Big(1_{S \leq t}\mathcal{W_{HT}}(t) K_H^{-1}w_t \Big)\right),
\end{align*}
 for $h$ satisfying (\ref{h1}). Especially, as $\beta_K(t,\rho_{K_t}, \Gamma_{K_t} )=0$,
\begin{equation*}
\frac{\partial}{\partial x}\mathbb{E}\Big(\phi(K_{t}^{K_0})\Big)= \mathbb{E}\left(\phi(K_{t}^{K_0}) \delta \Big(1_{S \leq t}K_H^{-1}\Big\{\int_0^. C_s^{-1}1_{[0,t]}(s)[R(s,\rho_{X_s^{x}})+\frac{1}{K_0T}] ds \Big\}\Big)\right).
\end{equation*}
\end{theorem}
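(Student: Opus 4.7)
The plan is to differentiate under the expectation, decompose $\partial_x K_t^{K_0}$ via Theorem~\ref{mt}, and apply Malliavin integration by parts to each of the two resulting pieces.

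First I would use the continuous differentiability of $x \mapsto K_t^{K_0}$ from Section~5 together with an $L^2$-bound on the flow to pass the derivative under the expectation,
$$\frac{\partial}{\partial x}\mathbb{E}\bigl(\phi(K_t^{K_0})\bigr) = \mathbb{E}\bigl(\phi'(K_t^{K_0})\,\partial_x K_t^{K_0}\bigr),$$
and then invoke Theorem~\ref{mt} to produce $h \in \mathcal{H} \cap Dom(\delta_W)$ satisfying (\ref{h1}) (with $R(s,\rho_{X_s^x})$ cut off by $1_{[0,t]}(s)$) such that
$$\partial_x K_t^{K_0} = \mathcal{M}^h_t + \langle D^H K_t, h\rangle_{\mathcal{H}}.$$
The directional-derivative summand is treated by the usual Malliavin duality: the chain rule $\phi'(K_t)D^H K_t = D^H\phi(K_t)$ gives $\mathbb{E}(\phi(K_t^{K_0})\,\delta^H(h))$, and restricting to $\{S \leq t\}$ simply pushes the indicator inside $\delta^H$.

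For the $\mathcal{M}^h_t$ summand I would use Theorem~\ref{vaznn} to select, on $\{S \leq t\}$, a direction $V_t^0 \in \mathcal{H} \cap Dom(K_H^*)$ with $\langle D^H K_t, V_t^0\rangle_{\mathcal{H}} = e_{t,X}^{-1}\mathcal{E}_t \neq 0$ and $K_H^* V_t^0 \in Dom(\delta_W)$, and then apply Lemma~\ref{deltahkh} with $F = K_t^{K_0}$, $G = \mathcal{M}^h_t 1_{S \leq t}$ and $v = V_t^0$. Since by construction $\mathcal{W}_{HT}(t) = \mathcal{M}^h_t/\langle D^H K_t, V_t^0\rangle_{\mathcal{H}}$ on $\{S \leq t\}$, Lemma~\ref{deltahkh} yields
$$\mathbb{E}\bigl(\phi'(K_t^{K_0})\mathcal{M}^h_t 1_{S \leq t}\bigr) = \mathbb{E}\Bigl(\phi(K_t^{K_0})\,\delta^H\bigl((K_H^*)^{-1}\mathcal{W}_{HT}(t) 1_{S \leq t} K_H^* V_t^0\bigr)\Bigr).$$
Writing $w_t := R_H V_t^0$, the identity $R_H = K_H K_H^*$ gives $K_H^{-1} w_t = K_H^* V_t^0$, which recovers the first displayed equality of the theorem. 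To pass to the second form I would apply Proposition~\ref{relationdeltaH}, $\delta^H(u) = \delta(K_H^* u)$: using again $R_H = K_H K_H^*$ one gets $K_H^* h = K_H^{-1}\bigl(\int_0^\cdot C_s^{-1}1_{[0,t]}(s) R(s,\rho_{X_s^x})\,ds\bigr)$, while $K_H^*\bigl[(K_H^*)^{-1}\mathcal{W}_{HT}(t) K_H^{-1} w_t\bigr] = \mathcal{W}_{HT}(t) K_H^{-1} w_t$. The special case $\beta_K \equiv 0$ follows from the remark after Theorem~\ref{vaznn}: then $V_t \equiv K_0$, hence $S = 0$ a.s., and $r \equiv 0$ makes $\mathcal{W}_{HT}(t) = k_0 = 1$ and $w_t = \tfrac{1}{K_0T}\int_0^\cdot C_r^{-1}1_{[0,t]}(r)\,dr$, so the two Skorokhod integrands combine under a single $K_H^{-1}$ exactly as displayed.

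The main obstacle is the verification of the domain hypotheses of Lemma~\ref{deltahkh}, namely that $\mathcal{W}_{HT}(t)\,K_H^* V_t^0$ actually lies in $Dom(\delta_W)$ (equivalently that $(K_H^*)^{-1}\mathcal{W}_{HT}(t) K_H^{-1} w_t \in Dom(\delta^H)$). This blends the $L^2([0,T] \times \Omega)$-control of $K_H^* V_t^0$ established inside the proof of Theorem~\ref{vaznn}, the integrability of $\mathcal{W}_{HT}(t)$ packaged by the Corollary preceding this section, the boundedness of $e_{\cdot,X} r(\cdot,\cdot)$ from Assumption~\ref{assum3}(i), and the moment estimates (\ref{bound}) on $\mathcal{E}_t$; in particular, the indicator $1_{S \leq t}$ is precisely what prevents the denominator $\langle D^H K_t, V_t^0\rangle_{\mathcal{H}} = e_{t,X}^{-1}\mathcal{E}_t$ from being divided by zero. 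On the complement $\{S > t\}$ one has $V_s \equiv 0$ a.e.\ on $(0,t]$, which forces $K_t \equiv 0$ and $\langle D^H K_t, h\rangle_{\mathcal{H}} \equiv 0$ there by (\ref{Dkt}), so inserting the indicator inside the Skorokhod integrals on the right-hand side is consistent with the decomposition of $\partial_x K_t$ on the left-hand side.
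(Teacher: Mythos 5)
Your proposal is correct and follows essentially the same route as the paper: differentiate under the expectation, decompose $\partial_x K_t^{K_0}$ via Theorem \ref{mt}, handle the $\langle D^H K_t, h\rangle_{\mathcal{H}}$ term by duality and the $\mathcal{M}^h_t$ term by Lemma \ref{deltahkh} with the direction $V_t^0$ from Theorem \ref{vaznn}, identifying $w_t = R_H V_t^0$ and reducing the special case $\beta_K \equiv 0$ to the explicit form of $V_{2,t}$. Your added remarks on the domain hypotheses and on why the set $\{S>t\}$ contributes nothing are consistent with (and slightly more explicit than) the paper's own argument.
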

\begin{proof}
For simplicity we use $K_{t}$ instead of $K_{t}^{K_0}$ and note that on the set $\{S>t\}$, the process $K_t$ is zero. From Theorem \ref{mt}, Lemma \ref{deltahkh} and Theorem \ref{vaznn}, for $V_t^0 \in \mathcal{H}\cap Dom(K^*_{H})$, we result
\begin{align*}
\frac{\partial}{\partial x}\mathbb{E}\Big(\phi(K_{t})\Big) & =\mathbb{E}\Big(1_{S > t}\phi'(K_{t})\frac{\partial K_t}{\partial x}\Big)+\mathbb{E}\Big(1_{S \leq t}\phi'(K_{t})\frac{\partial K_t}{\partial x}\Big)\nonumber\\
&=\mathbb{E}\Big(\phi'(K_{t})1_{S \leq t}< D^{H}K_t,h >_{\mathcal{H}}\Big)+ \mathbb{E}\Big(\phi'(K_{t})1_{S \leq t} \mathcal{M}^h_t\Big)\nonumber\\
%&=  \mathbb{E}\Big(\phi(K_{t}) \delta^H(h)\Big)-\frac{r(0)-r(s)}{\beta_K(s, \rho_{K_s}, \Gamma_{K_s})} \mathbb{E}\Big(\phi'(K_{t}) K_t\Big)-k_0 \mathbb{E}\Big(\phi'(K_{t})e^{-1}_t \mathcal{E}_t\Big)+r(0)\mathbb{E}\Big(\phi'(K_{t})t\Big) \nonumber\\
&= \mathbb{E}\Big(\phi(K_{t}) \delta^H(1_{S \leq t}h)\Big)+ \mathbb{E}\Big(\phi(K_{t})\delta^H\Big((K_H^*)^{-1}\frac{1_{S \leq t}\mathcal{M}^h_t K_H^{-1}(\int_0^. V_{2,t}(s) ds)}{<D^{H}K_t, V_t^0>_{\mathcal{H}}}\Big)\Big)\nonumber\\
&=  \mathbb{E}\Big(\phi(K_{t}) \delta^H(1_{S \leq t}h)\Big)+  \mathbb{E}\Big(\phi(K_{t})\delta^H\Big((K_H^*)^{-1}(1_{S \leq t}\mathcal{W_{HT}}(t) K_H^{-1}w_t   )  \Big)    \Big),
%& = \mathbb{E}\Big(\phi(K_{t}) \delta^H(h)\Big)-  \frac{\partial}{\partial{t}}\Big\{\frac{r(t)}{\beta_K(t, \rho_{K_t}, \Gamma_{K_t})}\Big\} \mathbb{E}\Big(\phi(K_{t})\Big)-  \frac{\partial}{\partial{t}}\Big\{\frac{r(t)}{\beta_K(t, \rho_{K_t}, \Gamma_{K_t})}\Big\} \mathbb{E}\Big(\psi'(K_{t})\Big), \label{ktphi}
\end{align*}
where $w_t = \int_0^. V_{2,t}(s) ds$. 
%where the function $\psi(x)=\phi(x)x$. Now, it is sufficient to find $ \mathbb{E}\Big(\psi'(K_{t})\Big)$. From Lemma \ref{deltahkh} in which $F=K_t$ and $G=1$, we result
%\begin{equation*}
%\mathbb{E}\Big(\psi'(K_t)\Big) = \mathbb{E}\left(\psi(K_t) \delta^H\Big( \frac{u}{<D^{H}K_t,u>_{\mathcal{H}}}\Big)\right)= \mathbb{E}\left(\psi(K_t) \delta^H\Big(\mathcal{W_{HT}}\Big)\right)
%\end{equation*}
%in which we put $u(r)=e_t\mathcal{E}_t^{-1} D_r^{H}K_{t}$ in the last equality. Substituting this last eauality in (\ref{ktphi}) will complete the first claim.
It implies the first part of the claim.  Especially, from Equation (\ref{espe}), $w_t= \int_0^. \frac{1}{K_0T} C_s^{-1}1_{[0,t]}(s) ds $ and the second part of the claim is held exactly in the same way. 
\end{proof}
We are actually able to rewrite the above theorem for the stochastic process $X$ by considering the relation between two processes $K_t$ and $X_t$. 
\begin{theorem}\textbf{(Bismut-Elworthy-Li formula)}
Assume that the measurabe function $\phi:\mathbb{R}^{d}\longrightarrow \mathbb{R}$ belongs to $C_b^1(\mathbb{R})$ and $\phi(X_{T}^{x})\in L^{2}(\Omega)$. Then under Assumption \ref{assum3},
\begin{align*}
\frac{\partial}{\partial x}\mathbb{E}\Big(\phi(X_{T}^{x})\Big)&=\mathbb{E}\left(\phi(X_{T}^{x}) \delta^H\Big(h+(K_H^*)^{-1}\{U(T)K_H^{-1}w_T\}\Big)\right)\\
&=\mathbb{E}\left(\phi(X_{T}^{x}) \delta \Big(K_H^{-1}\Big\{\int_0^. C_s^{-1}1_{[0,t]}(s) R(s,\rho_{X_s^{x}}) ds \Big\}+U(T) K_H^{-1}w_T \Big)\right).
\end{align*}
where $U(T)=\mathcal{W_{HT}}(T)-\frac{\frac{\partial}{\partial x} a(T, \Gamma_{X^x_T})}{C_T}e_T \mathcal{E}_T^{-1} $ and $h$ satisfying  (\ref{h1}).
\end{theorem}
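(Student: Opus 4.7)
The plan is to reduce the statement to Theorem \ref{Bismut-Elworthy-Li formula-K-t} via the transformation $K_t=X_t+a(t,\Gamma_{X_t})/C_t$ and then perform a separate integration by parts to absorb the boundary contribution produced at time $T$. The key initial observation is that $\Gamma_{X_T^x}=\E(\psi(X_T^x))$ is deterministic, so $a(T,\Gamma_{X_T^x})/C_T$ is deterministic. Consequently $D^H X_T=D^H K_T$ (the shift is non-random), and differentiation in $x$ yields
\begin{equation*}
\frac{\partial}{\partial x}\E(\phi(X_T^x))
=\E\Bigl(\phi'(X_T^x)\frac{\partial K_T}{\partial x}\Bigr)
-\frac{\tfrac{\partial}{\partial x}a(T,\Gamma_{X_T^x})}{C_T}\,\E(\phi'(X_T^x)).
\end{equation*}
Thus everything is reduced to two Malliavin integrations by parts in which the test random variable is still $X_T$, but all Malliavin objects are those of $K_T$.

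Next I would substitute the transfer identity of Theorem \ref{mt}, $\partial_x K_T=\langle D^H K_T,h\rangle_{\mathcal{H}}+\mathcal{M}^h_T$, with $h$ chosen by \eqref{h1}. The first piece $\E(\phi'(X_T)\langle D^H K_T,h\rangle_{\mathcal{H}})=\E(\phi'(X_T)\langle D^H X_T,h\rangle_{\mathcal{H}})$ is directly $\E(\phi(X_T)\delta^H(h))$ because the end of the proof of Theorem \ref{mt} shows $h\in Dom(\delta^H)$. For the second piece, I invoke Lemma \ref{deltahkh} with $F=X_T$, $v=V_T^0$ (supplied by Theorem \ref{vaznn}) and $G=\mathcal{M}^h_T$. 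Since $\langle D^H X_T,V_T^0\rangle_{\mathcal{H}}=\langle D^H K_T,V_T^0\rangle_{\mathcal{H}}=e_{T,X}^{-1}\mathcal{E}_T$, the ratio $\mathcal{M}^h_T/\langle D^H K_T,V_T^0\rangle_{\mathcal{H}}$ equals $\mathcal{W}_{HT}(T)$, and using $K_H^* V_T^0=K_H^{-1}w_T$ (also from Theorem \ref{vaznn}) this term becomes $\E(\phi(X_T)\delta^H((K_H^*)^{-1}\mathcal{W}_{HT}(T) K_H^{-1}w_T))$.

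The boundary term $\E(\phi'(X_T))$ is treated by the same lemma with $G\equiv 1$: it produces $\E(\phi(X_T)\delta^H((K_H^*)^{-1} e_{T,X}\mathcal{E}_T^{-1} K_H^{-1}w_T))$. Adding the three contributions, pulling the deterministic factor $(\partial_x a(T,\Gamma_{X_T^x}))/C_T$ inside $\delta^H$, and using linearity of $\delta^H$, the three Skorokhod integrals collapse into a single one whose integrand is exactly $h+(K_H^*)^{-1}\{U(T) K_H^{-1} w_T\}$, giving the first displayed formula. The second formula then follows from Proposition \ref{relationdeltaH}, namely $\delta^H(u)=\delta(K_H^*u)$, together with $K_H^*h=K_H^{-1}(R_Hh)=K_H^{-1}\{\int_0^{\cdot}C_s^{-1}R(s,\rho_{X_s^x})\,ds\}$ by \eqref{h1}.

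The main obstacle is not the algebraic rearrangement but rather the admissibility check for Lemma \ref{deltahkh}: one has to verify that both $(K_H^*)^{-1}\{\mathcal{W}_{HT}(T) K_H^*V_T^0\}$ and $(K_H^*)^{-1}\{e_{T,X}\mathcal{E}_T^{-1} K_H^*V_T^0\}$ lie in $Dom(\delta^H)$. The first membership is the content of the corollary after Theorem \ref{mt} combined with the image-characterization $Dom(\delta^H)=(K_H^*)^{-1}Dom(\delta_W)$ of Proposition \ref{relationdeltaH}; the second follows by the same reasoning applied to the $L^2$-bounds for $\sup_{s\le T}\mathcal{E}_s^{-p}$ recorded in \eqref{bound}, together with the Skorokhod integrability of $K_H^{-1}w_T$ established in Theorem \ref{vaznn}. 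Once these memberships are in hand, the argument above is routine and yields the claim.
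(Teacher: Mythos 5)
Your proposal is correct and follows essentially the same route as the paper: decompose $\phi(X_T^x)=\phi(K_T-a(T,\Gamma_{X_T^x})/C_T)$, apply the transfer identity of Theorem \ref{mt} together with Lemma \ref{deltahkh} and the process $V_T^0$ from Theorem \ref{vaznn} to the $\partial_x K_T$ part, and absorb the deterministic shift $\partial_x a(T,\Gamma_{X_T^x})/C_T$ via the same lemma with $G=1$, which produces exactly the correction defining $U(T)$. Your added remarks (that $\Gamma_{X_T^x}$ is deterministic, hence $D^HX_T=D^HK_T$, and the explicit admissibility checks for the two Skorokhod integrands) are points the paper leaves implicit, but they do not change the argument.
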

\begin{proof}
%Define $f(K_T^{K_0})=\phi'(K_T^{K_0}-\frac{a(T, \Gamma_{X_T^x})}{C_T})$, for $K_0=x+\frac{a(0, \E(\psi(x)))}{C_0}$. Then 
%$$f'(K_T^{K_0})\frac{\partial}{\partial x}K_T^x=\phi'(K_T^{K_0}-\frac{a(T, \Gamma_{X_T^x})}{C_T})[\frac{\partial}{\partial x}K_T^{K_0} +\frac{\frac{\partial}{\partial x}a(T, \Gamma_{X_T^x})}{C_T} ]$$
 From Theorem \ref{Bismut-Elworthy-Li formula-K-t} and Lemma \ref{deltahkh} as $G=1$, in connection with the fact $D^HK_t=D^HX_t$,
\begin{align*}
\frac{\partial}{\partial x}\mathbb{E}\Big(\phi(X_{T}^{x})\Big)&=
\mathbb{E}\Big(\phi'(K_{T}^{k_0}-\frac{a(T, \Gamma_{X_T^x})}{C_T})\left[\frac{\partial K_T}{\partial x}- \frac{\frac{\partial}{\partial x}a(T, \Gamma_{X_T^x})}{C_T}\right]\Big)\\
&= \mathbb{E}\Big(\phi({X_T^{x}})  \delta^H(h) \Big)+\mathbb{E}\Big(\phi(X_{T}^{x})  \delta^H\Big((K_H^*)^{-1}\{\mathcal{W_{HT}}(T) K_H^{-1}w_T\}\Big) \Big)\\
&- \mathbb{E}\Big(\phi(X_{T}^{x})  \delta^H\Big((K_H^*)^{-1}\frac{\frac{\partial}{\partial x}a(T, \Gamma_{X_T^x})}{C_T}\frac{K_H^* V_T^0}{<D^{H}K_T, V_T^0>_{\mathcal{H}}}\Big)\\
&= \mathbb{E}\Big(\phi(X_{T}^{x})  \delta^H(h) \Big)+\mathbb{E}\Big(\phi(X_{T}^{x})  \delta^H\Big(U(T) K_H^{-1}w_T\Big)\Big).
\end{align*}
\end{proof}
%%%%%%%%%%%%%%%%%%%%%%%%%%%%5
\section{Application}
\noindent The first fractional volatility model was presented in \cite{comte}, which is based on the fractional Ornstein-Uhlenbeck process with a Hurst parameter $H > 1/2$ to describe the slow flattening of smiles and skews as the time to maturity increases.  The volatility model around the fractional literature has subsequently developed in many articles, for instance, see \cite{cheri, comte2, alos2, bayer}. 
Long memory in the volatility process becomes much
needed in order to justify the long-standing conundrums such as steep volatility smiles in
long-term options and co-movements between implied and realized volatility. (see also \cite{wang2}).
A fractional Ornstein–Uhlenbeck process  is defined as the solution of the SDE
\begin{equation}\label{sigma}
d\sigma_t=( \nu- \kappa \sigma_t )dt + u dW_t^H,
\end{equation}
where $\nu, \kappa$, and $u$ are positive parameters and the stochastic integral is a pathwise Riemann-Stieltjes integral.
Gatheral \cite{gatheral} have considered rough stochastic fractional volatility model on the time interval $[0,T]$, as the volatility $x_t$ is the form $x_t =e^{\sigma_t}$ for $\sigma_t$ satisfying (\ref{sigma}). 
The volatility model is essential for investors in predicting their portfolio \cite{foko}, forming suitable hedging strategies \cite{euch, salmon}, pricing volatility indices and other derivative products \cite{jai}, etc.\\
Analysis of variance and volatility swaps,  a forward contract on realized stock variance and volatility, are developed with the underlying
stochastic volatility models. For example, the authors in \cite{swi} choose a
drift-adjusted version of the Heston model, and in \cite{kim} the authors choose the log-normal SABR model with
fractional stochastic volatility to price variance and volatility swaps. It is reasonable to develop the model as a mean-field stochastic differential equation and consider the sensitivity of the price of swaps, without achieving a closed-form exact formula for the price of the swaps, based on the Malliavin derivative of the volatility. The model considered in the next example is inspired of the Example 4.1 in \cite{banos} in fractional literature.
\begin{example}\label{exam1}
Consider the stochastic volatility dynamics $\{\sigma _{t}\}_{t\in[0,t]}$ of an asset with initial volatility $ x>0 $ in Block-Scholes model with coninuous dividend payments represented by the following SDE
\begin{equation}\label{sig}
d\sigma_{t}=(\mu - q\rho_{t}^{x})\sigma_t dt +\alpha\sigma_t dW_{t}^{H},  \qquad \rho_{t}^{x}=\mathbb{E}(\sigma_{t}^{x}),\quad t\in [0,T],
\end{equation}
where $\mu,q,\alpha\in\mathbb{R^+}$, $\mu>0$. It is worth mention that if the parameter $q$ in (\ref{sig}) would be zero, our model coincides with the models we refereed them. \\
A variance swap is a forward contract on realized stock variance $\sigma^2 _{R}(x)=\frac{1}{T}\int_{0}^{T}\sigma_{s}^{2}ds$, with payoff function $N(\sigma_{R}^{2}-K_{var})$ in which $N$ is the notional amount  and $K_{var}$ is strike of the swap. 
The price of a variance forward contract is the discounted value of the payoff in a risk-neutral world with a risk-less rate $r>0$.
\begin{equation*}
P_{var}(x)=\mathbb{E^{Q}}\Big(e^{-rT}(\sigma_{\mathbb{R}}^{2}(x) - K_{var})\Big),
\end{equation*}
where $Q$ is a minimal martingale measure, i.e.;
\begin{equation*}
\forall t\in [0,T]\quad  {M}_{t}^{x}:=\frac{d\mathbb{Q}}{dp}|_{\mathcal{F}_t}=exp\Big\{\int_{0}^{t}\Theta _{u}^{x}dW_{u}^{H} -\frac{1}{2}\int _{0}^{t}\int_{0}^{t}\Theta_{\nu}^{x}\Theta _{u}^{x}\phi(\nu, u)du d\nu\Big\},
\end{equation*}
for some $\Theta(.)$ satisfying $\mu-q\rho_{t}^{x} - \alpha \int_{0}^{t}\phi (t,u)\Theta_u du =r$, (see also \cite{hab}).\\
To consider the sensitivity of the price with respect to initial volatility $x$, written by $\frac{1}{T}\int_{0}^{T}\frac{\partial}{\partial x}\mathbb{E}(\sigma_{s}^{2})ds$, we need to find an expression for $\frac{\partial}{\partial x}\mathbb{E}(\sigma_{s}^{2})$. To this end, we follow the technical approach described in this manuscript.\\
One can easily solves a Riccati's equation to obtain $\rho_{t}^{x}=\frac{x\mu e^{\mu t}}{q x e^{\mu t}+\mu - q x}$. Therefore, $\frac{\partial}{\partial x}\rho_{t}^{x}=\frac{e^{-\mu t}}{x^{2}}(\rho _{t}^{x})^{2}$, 
%\begin{equation*}
%-q\frac{e^{-\mu t}}{x^{2}}(\rho_{t}^{x})^{2} -\alpha \int_{0}^{t}\Phi(t,s)\frac{\partial \Theta_s}{\partial x} ds=0.
%\end{equation*}
\begin{equation*}
\int _{0}^{t}\phi(t,s)\frac{\partial \Theta_s }{\partial x}ds= -\frac{qe ^{-\mu t}}{\alpha  x^{2}}(\rho _{t}^{x})^{2},
\end{equation*}
and 
\begin{equation*}
\frac{\partial}{\partial x}M_{t}^{x} = M_{t}^{x} \Big[ \int_0^t  \frac{\partial  \Theta _{u}^{x}}{\partial x}dW_{u}^{H} -\int _{0}^{t}\int_{0}^{t} \frac{\partial \Theta _{u}^{x}}{\partial x}\Theta _{\nu}^{x}\phi(\nu, u)du d\nu\Big].
\end{equation*}
Assume that $\mu > qx>0$. Theorem \ref{mt} shows that $\frac{\partial}{\partial x}\sigma _{t}=\langle D^H\sigma_t, h\rangle_\mathcal{H} + e_{t,X}^{-1}\mathcal{E}_t$ and $$R_Hh(t) = -\frac{q}{x^2}\alpha^{-1} \int_0^t e ^{-\mu s}(\rho _{s}^{x})^{2} ds.$$ 
In addition,  $w_t(u) = \frac{1}{K_0T\alpha} (t \wedge u) $ and therefore from (\ref{finit}),
$$( K_H^{-1}w_t )(u)= \frac{1}{K_0T\alpha} u^{\frac12-H}\Big(1_{[0,t]}(u)+ g_0\Big).$$
 Now, applying Lemma \ref{deltahkh} for $v=V_t^0$ and then Theorem \em{\ref{vaznn}} to reult 
\begin{align*}
\frac{\partial}{\partial x}\mathbb{E}^Q(\sigma _{t}^{2})&
=\mathbb{E}(\sigma_{t}^{2}\frac{\partial}{\partial x}{M}_{t}^{x} )+2\mathbb{E}(M_t^x  \sigma _{t}\frac{\partial}{\partial x}\sigma _{t})\\
&=\mathbb{E}(\sigma_{t}^{2}\frac{\partial}{\partial x}{M}_{t}^{x} )+\mathbb{E}\Big( M_t^x \langle D^H\sigma_t^2, h\rangle_\mathcal{H}\Big)+ \mathbb{E}\Big(2 \sigma _{t} M_t^x e_{t,X}^{-1}\mathcal{E}_t\Big)\\
&=\mathbb{E}(\sigma_{t}^{2}\frac{\partial}{\partial x}{M}_{t}^{x} ) +\mathbb{E}\Big( \sigma_{t}^2 M_t^x \delta^H(h)\Big) -\mathbb{E}\Big( \sigma_{t}^2 \langle D^HM_t^x, h\rangle_\mathcal{H}\Big)+ \mathbb{E}\Big( \sigma _{t}^2 \int_0^t M_t^x  K_H^{-1}(w_t)(s) dW_s\Big)\\
&=\mathbb{E}(\sigma_{t}^{2}\frac{\partial}{\partial x}{M}_{t}^{x} )-\frac{q}{x^2}\mathbb{E}\Big( \sigma _{t}^2 M_t^x \int_0^t K_H^{-1}\Big\{\int_0^. e^{-\mu u}(\rho_{u}^{x})^2 du \Big\}(s) dW_s\Big)\\
& - \mathbb{E}^Q( \sigma_{t}^2)\int_0^t\int_0^s \phi(s,u) \frac{\partial \Theta _{u}^{x}}{\partial x} du ds+  \frac{1+g_0}{K_0T\alpha}\mathbb{E}\Big( \sigma _{t}^2 \int_0^t M_t^x s^{\frac12-H} dW_s\Big).\\
\end{align*}
\begin{rem}
We note that if $\mathbb{E}(\sigma_{t})=\rho_{t}^{x}=0$ or $q=0$, then 
 \begin{equation*}
\frac{\partial}{\partial x}\mathbb{E}(\sigma _{t}^{2})= \mathbb{E}\Big(\sigma_{t}^{2} \Big[\frac{\partial}{\partial x}{M}_{t}^{x}+ \frac{1+g_0}{K_0T\alpha} \int_0^t  M_t^x s^{\frac12-H}  dW_s\Big] \Big).
\end{equation*}
\end{rem}
\end{example}
%\begin{equation*}
%dS_{t}^{x}=(\mu-q\rho_{t}^{x})S_{t}dt +\sigma S_{t}d\omega_{t}
%\end{equation*}
%\begin{equation*}
%\frac{\partial}{\partial x}\mathbb{E}^{P}(\Phi(S_{T}^{x}))=\mathbb{E}(\Phi(S_{T}^{x})\frac{e^{\mu T}}{x^{2}\sigma}\rho_{T}^{x}
%\int_{0}^{T} a(s)\mathbb{E}_{T}^{x}\delta \omega _{s})
%\end{equation*}
%Fractional Brownian Motion
%\begin{equation*}
%\frac{\partial}{\partial x}X_{t}=D_{r}^{\Phi}X_{t}(\int _{0}^{t}\sigma (s,x_{s},\mathbb{E}(x_{s}))\Phi(s,r)Y_{s}^{-1}ds)^{-1}u(t)
%\end{equation*}
\begin{example}
Consider the following stochastic volatility model
\begin{equation*}
\left\{\begin{array}{lr}
S_t^{x_{1},x_{2}}=x_{1}+\int_{0}^{t}\mu S_{u}^{x_{1},x_{2}}du +\int_{0}^{t} g(\sigma_{u}^{x_{2}})S_{u}^{x_{1},x_{2}} dW_{u},\\
\sigma_{t}^{x_{2}}=x_{2}+\int_{0}^{t}(\mu -q \rho_{t}^{x_{2}})\sigma_{t}^{x_{2}}du+\int_{0}^{t} \alpha\sigma_{u}^{x_{2}}dW_{u}^{H}, \quad t \in [0,T],
\end{array}\right.
\end{equation*}
where  $\mu,q,\alpha\in\mathbb{R^+}$, $\mu > qx >0$ and $g: \mathbb{R} \rightarrow \mathbb{R}$ is a bounded measurable function with Lipschitz continuity property. For simplicity, denote by $S_t$ and $\sigma_t$, respectively, the solution of the above equation. We would like to investigate the sensitivity of the payoff functions of the form with respect to $x_2$. To do this, it follows from above example that 
$$\frac{\partial}{\partial x_2}\sigma _{t}=\langle D^H\sigma_t, h\rangle_\mathcal{H} + e_{t,X}^{-1}\mathcal{E}_t,$$ 
where $R_Hh(t) = -\frac{q}{x_2^2}\alpha^{-1} \int_0^t e ^{-\mu s}(\rho _{s}^{x_2})^{2} ds$. Also, from Theorem \ref{vaznn}, we know $\langle D^H\sigma_t, V_t^0\rangle_\mathcal{H} =  e_{t,X}^{-1}\mathcal{E}_t$. Thus, 
$$\frac{\partial}{\partial x_2}\sigma _{t}=\langle D^H\sigma_t, h+V_t^0\rangle_\mathcal{H}. $$
On the other hand, 
\begin{equation*}
\frac{\partial}{\partial x_{2}} S_{t}=\int_0^t\mu\frac{\partial}{\partial x_{2}} S_{s}ds +\int_0^t \{ g(\sigma_{s})
\frac{\partial}{\partial x_{2}}S_{s}+ S g'(\sigma_{s})\frac{\partial}{\partial x_{2}}\sigma_{s}\}dW_{s},\\
\end{equation*}
and
\begin{align*}
 D_{r}^{H} S_{t}& =\int _{0}^{t}\mu D_{r}^{H}S_{s}ds +\int_{0}^{t}\{g(\sigma_{s}) D_{r}^{H} S_{s} +S_{s}g'(\sigma _{s}) D_{r}^{H}\sigma_{s}\}dW_s,\\
\end{align*}
Then, since the solution is unique, $\frac{\partial}{\partial x_{2}} S_{t}=\langle D^{H} S_{t},  h+V_t^0\rangle_\mathcal{H}$.
Therefore, it follows for $X_t=(S_t, \sigma_t)$ that
\begin{align*}
\frac{\partial}{\partial x_{2}}\mathbb{E}\Big(\Phi_{s}^{\prime}(X_{t})\Big) &= \mathbb{E}\Big(\Phi_{s}^{\prime}(X_{t})\frac{\partial}{\partial x_{2}}S_{t}\Big)+\mathbb{E}\Big(\Phi'_{\sigma}(X_{t})\frac{\partial}{\partial x_{2}}\sigma_{t}\Big) \\
&=\mathbb{E}\Big(\Phi_{s}^{\prime}(X_{t}) \langle D^{H} S_{t},  h+V_t^0\rangle_\mathcal{H}\Big)
+\mathbb{E}\Big(\Phi_{\sigma}^{\prime}(X_{t})\langle D^{H} \sigma_{t},  h+V_t^0\rangle_\mathcal{H}\Big)\\
&=\mathbb{E}\Big( \langle D^{H}(\Phi(X_{t}) ) ,  h+V_t^0\rangle_\mathcal{H}\Big)\\
&=\mathbb{E}\Big(\Phi(X_{t})\delta(-\frac{q}{x^2} K_H^{-1}\Big\{\int_0^. e^{-\mu u}(\rho_{u}^{x})^2 du \Big\}+ K_H^{-1}w_t)\Big).
\end{align*}
\end{example}
\section{Conclusions}
In this work, we proved the Malliavin differentiability of the solution on mean-field stochastic differential equations whose dependence is the form of expection functional of the solution itself. We formulated the Bismut formula for the solution by changing the solution by another process whose Malliavin derivative and flow can be easily modified each other. We showed the application of the Bismut formula in the sensitivity analysis of variance swaps with distribution-dependent models.


\begin{thebibliography}{99}
\bibitem{alos2}{E. Al\'{o}s and Y. Yang, (2017). A fractional Heston model with H > 1/2, \textit{Stochastics}, \textbf{89(1)}, 384-399.}
\bibitem{alos}{E. Al\'{o}s, O. Mazet and D. Nualart, (2001). Stochastic calculus with respect to Gaussian
processes, \textit{Ann. Probab.}  \textbf{29}, 766-801.}
\bibitem{amine}{O. Amine, E. Coffie, F. Harang and F. Proske, A Bismut-Elworthy-Li Formula for Singular SDEs Driven by a Fractional Brownian Motion and Applications to Rough Volatility Modeling, submitted 2018,	
https://doi.org/10.48550/arXiv.1805.11435.}
\bibitem{banos}{D. Ba\~nos, (2018). The Bismut-Elworthy-Li formula for mean-field stochastic differential
equations. \textit{Ann. Inst. H. Poincaré Probab. Statist.} \textbf{54(1)}, 220-233.}
\bibitem{bao}{J. Bao, P. Ren and F.-Y. Wang,  (2021). Bismut formula for Lions derivative of distribution-path
dependent SDEs, \textit{J. Differential Equations}, \textbf{282}, 285-329.}
\bibitem{bayer}{C. Bayer, P. Friz, P. Gassiat, J. Martin, and B. Stemper, (2020). A regularity structure for rough
volatility, \textit{Math. Finance}, \textbf{30 (3)}, 782-832.}
\bibitem{14}{M. Bauer and T. Meyer-Brandis. Strong Solutions of Mean-Field SDEs with
irregular expectation functional in the drift, submitted 2019,  	
https://doi.org/10.48550/arXiv.1912.06534.}
\bibitem{15}{M. Bauer and T. Meyer-Brandis and F. Proske, Strong solutions of mean-field stochastic differential equations with irregular drift, submitted 2018,  	
https://doi.org/10.48550/arXiv.1806.11451.}
\bibitem{biagini}{F. Biagini, Y., Hu, B. oksendal, and A. Sulem, (2002). A stochastic maximum principle for processes driven by
fractional Brownian motion, \textit{Stochastic Process. Appl.} \textbf{100}, 233-253.}
\bibitem{oksendal}{F. Biagini, Y. Hu, B. Oksendal and T. Zhang, (2008). \textit{Stochastic Calculus for Fractional Brownian Motion and Applications}, Springer.}
%\bibitem{buckdahn}{R. Buckdahn, Linear Skorohod stochastic differential equations, Probab. Th. Rel. Fields, 90, 223-240, (1991).}
\bibitem{buckda}{R. Buckdahn and S. Jing,  (2017). Mean-field SDE driven by a fractional Brownian motion and related
stochastic control problem, \textit{SIAM J. Control Optim.} \textbf{55}, 1500-1533.}
\bibitem{buckda2}{R. Buckdahn, J. Li, S. Peng and C. Rainer,  (2017). Mean-field stochastic differential equations and
associated PDEs, \textit{Ann. Probab.} \textbf{45(2)}, 824-878.}
\bibitem{carmona}{R. Carmona and F. Delarue, (2013). Probabilistic analysis of mean-field games, \textit{SIAM J. Control
Optim.} \textbf{51}, 2705-2734.}
\bibitem{1}{ R. Carmona, J. P. Fouque,  S. M. Mousavi,  and  L. H.Sun,  (2018). Systemic Risk and Stochastic Games with Delay. \textit{Journal of Optimization Theory and Applications}, \textbf{179(2)}, 366-399.}
\bibitem{2}{R. Carmona, J. P. Fouque, and L. H. Sun, (2015). Mean field games and systemic risk.
\textit{Communications in Mathematical Sciences}, \textbf{13(4)}, 911-933.}
\bibitem{comte2}{F. Comte, L. Coutin, and \'{E}. Renault, (2012). Affine fractional stochastic
volatility models. \textit{Ann. Finance}, \textbf{8(2-3)}, 337-378.}
\bibitem{comte}{F. Comte, E. Renault, (1998). Long memory in continuous-time stochastic volatility models. Mathematical Finance. \textit{Int. J. Math. Stat.
Financ. Econ.} \textbf{8}, 291-323.}
\bibitem{cheri}{P. Cheridito, H. Kawaguchi, and M. Maejima, (2003). Fractional Ornstein-Uhlenbeck processes, \textit{Electron. J. Probab}, \textbf{8(3)}, 1-14.}
\bibitem{duncan}{E. Duncan, Y. Hu, B. Pasik-Duncan,  (2000). Stochastic calculus for fractional Brownian motion I. \textit{Theory, SIAM J. Control Optim.} \textbf{38(2)}, 582-612.}
\bibitem{euch}{O. E. Euch, M. Rosenbaum, (2018). Perfect hedging in rough Heston models. \textit{Annals
of Probability}, \textbf{28(6)}, 3813-3856.}
\bibitem{fanhuang}{X. L. Fan, X. Huang, Y. Suo and C. Yuan, (2022). Distribution dependent SDEs driven by
fractional Brownian motions, \textit{Stochastic Process. Appl.} \textbf{151}, 23-67. }
\bibitem{fan}{X. L. Fan and Y. Ren, (2017). Bismut formulae and applications for
stochastic (functional) differential equations driven by
fractional Brownian motions, \textit{Stoch. Dyn.} \textbf{17}, 1750028.}
\bibitem{foko}{J. P. Fouque, R. Hu,  (2018). Optimal portfolio under fast mean-reverting fractional
stochastic environment. \textit{SIAM Journal of Financial Mathematics}, \textbf{9(2)}, 564-601.}
\bibitem{3}{J. P. Fouque and T. Ichiba, (2013). Stability in a model of inter-bank lending. SIAM
\textit{Journal on Financial Mathematics}, \textbf{4}, 784-803.}
\bibitem{4}{J. Garnier, G. Papanicolaou, and T. W. Yang, (2013). Large deviations for a mean field
model of systemic risk. \textit{SIAM Journal on Financial Mathematics}, \textbf{4(1)}, 151-184.}
\bibitem{gatheral}{J. Gatheral, T. Jaisson, and M. Rosenbaum, (2018). Volatility is rough, \textit{Quantitative Finance}, \textbf{18(6)},
933-949.}
\bibitem{hab}{S. Habtemicael and I. SenGupta, (2016). Pricing variance and volatility swaps for Barndorff-Nielsen
and Shephard process driven financial markets, \textit{International Journal of Financial Engineering}, \textbf{3(4)},
1650027.}
\bibitem{Hu2}{Y. Han, Y. Hu and J. Song,  (2013). Maximum principle for general controlled systems driven by
fractional Brownian motions. \textit{Appl. Math. Optim.}, \textbf{67}, 279-322.}
\bibitem{Hu}{Y. Hu, ( 2005).\textit{Integral Transformations and Anticipative Calculus for Fractional Brownian Motions},
Mem. Amer. Math. Soc.}
\bibitem{flinear}{Y. Hu, X. Y. Zhou, (2005). Stochastic control for linear systems driven by fractional noises. \textit{SIAM J. Control and Optim.} \textbf{43(6)}, 2245-2277.}
\bibitem{huang5}{M. Huang, R. Malham\'{e} and P. Caines,  (2006). Large population stochastic dynamic games: closedloop
McKean-Vlasov systems and the Nash certainty equivalence principle, \textit{Commun. Inf.
Syst.} \textbf{6}, 221–251.}
\bibitem{huang4}{X. Huang, M. R\"{o}ckner and F.-Y. Wang, (2019). Nonlinear Fokker-Planck equations for probability
measures on path space and path-distribution dependent SDEs, \textit{Discrete Contin. Dyn. Syst.}
\textbf{39}, 3017-3035}
\bibitem{huang}{X. Huang and F.-Y. Wang,  (2019). Distribution dependent SDEs with singular coefficients, \textit{Stochastic
Process. Appl.} \textbf{129}, 4747-4770.}
\bibitem{huang2}{X. Huang and F. Y. Wang,  (2022). Singular McKean-Vlasov (reflecting) SDEs with distribution dependent noise, \textit{J. mathematical Analysis and Applications}, \textbf{514 (1)}, 126301.}
\bibitem{huang3}{X. Huang and F. Y. Wang, Log-Harnack Inequality and Bismut Formula for Singular McKean-Vlasov SDEs, submitted 2022, https://doi.org/10.48550/arXiv.2207.11536. }
\bibitem{jai}{T. Jaisson, M. Rosenbaum,  (2016). Rough fractional diffusions as scaling limits of nearly
unstable heavy tailed Hawkes processes. \textit{Annals of Applied Probability}, \textbf{26(5)}, 2860-2882.}
\bibitem{jing}{S. Jing and J. A. Le\'{o}n, (2011). Semilinear backward doubly stochastic differential equations and SPDEs
driven by fractional Brownian motion with Hurst parameter in (0, 1/2), \textit{Bull. Sci. Math.}
\textbf{135}, 896-935.}
\bibitem{9}{B. Jourdain, (1997). Diffusions with a nonlinear irregular drift coefficient and probabilistic
interpretation of generalized Burger equations. \textit{ESAIM: Probability
and Statistics}, \textbf{1}, 339-355.}
\bibitem{10}{M. Kac, \textit{Foundations of Kinetic Theory}. In Proceedings of the Third Berkeley
Symposium on Mathematical Statistics and Probability, Volume 3: Contributions
to Astronomy and Physics, pages 171-197, Berkeley, Calif., 1956. University
of California Press.}
\bibitem{kim}{S-W. Kim and J-H. Kim, (2020). Volatility and variance swaps and options in the fractional SABR
model, \textit{The European Journal of Finance}, \textbf{26(17)}, 1725-1745.}
\bibitem{5}{J. M. Lasry and P. L. Lions,  (2007). Mean field games, \textit{Japanese Journal of Mathematics},
\textbf{2(1)}, 229-260.}
\bibitem{13}{J. Li and H. Min,  (2016). Weak Solutions of Mean-Field Stochastic Differential Equations
and Application to Zero-Sum Stochastic Differential Games, \textit{SIAM Journal
on Control and Optimization}, \textbf{54(3)},1826-1858.}
\bibitem{11}{H. P. McKean, (1966). A class of Markov processes associated with nonlinear parabolic
equations. \textit{Proc Natl Acad Sci USA}, \textbf{56(6)},1907-1911.}
\bibitem{Mishura}{Y. S. Mishura, (2008). \textit{Stochastic Calculus for Fractional Brownian Motion and Related Processes}, Springer.}
\bibitem{7}{Y. S. Mishura and A. Y. Veretennikov, (2021). Existence and uniqueness theorems for
solutions of McKean–Vlasov stochastic equations, \textit{Theory of Probability and Mathematical Statistics}, \textbf{103}, 59-101.}
\bibitem{nualart}{D. Nualart, (2010). \textit{Malliavin Calculus and Related Topics}, 2nd ed., Springer, New York.}
\bibitem{salmon}{N. Salmon,  (2021). Fractional Barndorff-Nielsen and Shephard model:
applications in variance and volatility swaps, and hedging, \textit{Annals of Finance}, \textbf{17}, 529-558.}
\bibitem{samko}{S. G. Samko, A. A. Kilbas and O. I. Marichev, (1993).\textit{Fractional Integrals and Derivatives, Theory
and Applications}, Gordon and Breach Science Publishers, Yvendon.}
\bibitem{swi}{A. Swishchuk, N. Vadori, (2014). Smiling for the delayed volatility swaps, Wilmott,  \textbf{2014(74)}, 62-73.}
\bibitem{12}{A. A. Vlasov, (1968). The vibrational properties of an electron gas, \textit{Soviet Physics
Uspekhi}, \textbf{10(6)}, 721.}
\bibitem{wang}{F. Y. Wang, (2018). Distribution dependent SDEs for Landau type equations, \textit{Stochastic Processes and their Applications}, \textbf{128(2)}, 595-621. }
\bibitem{wang2}{X. Wang, W. Xiao, J. Yu,  (2021). Modeling and forecasting realized volatility with the fractional Ornstein-Uhlenbeck Process, . \textit{Journal of Econometrics}, 1-27,  Research Collection School Of
Economics.}
\end{thebibliography}
\end{document}